\DeclareMathOperator{\Gal}{Gal}
\DeclareMathOperator{\Img}{Im}
\DeclareMathOperator{\Ker}{Ker}
\DeclareMathOperator{\res}{res}
\DeclareMathOperator{\trg}{trg}
\DeclareMathOperator{\ab}{ab}
\newcommand{\rmH}{\mathrm{H}}
\newcommand{\bfH}{\mathbf{H}}
\DeclareFontFamily{U}{wncy}{}
\DeclareFontShape{U}{wncy}{m}{n}{<->wncyr10}{}
\DeclareSymbolFont{mcy}{U}{wncy}{m}{n}
\DeclareMathSymbol{\Sha}{\mathord}{mcy}{"58}
\DeclareMathSymbol{\sha}{\mathord}{mcy}{"78}
\begin{document}

\newtheorem{thm}{Theorem}[section]
\newtheorem{cor}[thm]{Corollary}
\newtheorem{lem}[thm]{Lemma}
\newtheorem{fact}[thm]{Fact}
\newtheorem{prop}[thm]{Proposition}
\newtheorem{defin}[thm]{Definition}
\newtheorem{exam}[thm]{Example}
\newtheorem{examples}[thm]{Examples}
\newtheorem{rem}[thm]{Remark}
\newtheorem{case}{\sl Case}
\newtheorem{claim}{Claim}
\newtheorem{question}[thm]{Question}
\newtheorem{conj}[thm]{Conjecture}
\newtheorem*{notation}{Notation}
\swapnumbers
\newtheorem{rems}[thm]{Remarks}
\newtheorem*{acknowledgment}{Acknowledgements}
\newtheorem*{thmno}{Theorem}

\newtheorem{questions}[thm]{Questions}
\numberwithin{equation}{section}

\newcommand{\gr}{\mathrm{gr}}
\newcommand{\inv}{^{-1}}
\newcommand{\isom}{\cong}
\newcommand{\dbC}{\mathbb{C}}
\newcommand{\F}{\mathbb{F}}
\newcommand{\dbN}{\mathbb{N}}
\newcommand{\Q}{\mathbb{Q}}
\newcommand{\dbR}{\mathbb{R}}
\newcommand{\dbU}{\mathbb{U}}
\newcommand{\Z}{\mathbb{Z}}
\newcommand{\calG}{\mathcal{G}}
\newcommand{\K}{\mathbb{K}}
\newcommand{\calX}{\mathcal{X}}
\newcommand{\calY}{\mathcal{Y}}


\newcommand{\hac}{\hat c}
\newcommand{\hatheta}{\hat\theta}

\title[1-cyclotomic oriented pro-$p$ groups]{Chasing maximal pro-$p$ Galois groups \\ via 1-cyclotomicity}
\author{Claudio Quadrelli}
\address{Department of Science and High-Tech, University of Insubria, Como, Italy EU}
\email{claudio.quadrelli@uninsubria.it}
\date{\today}


\begin{abstract}
 Let $p$ be a prime.
We prove that certain amalgamated free pro-$p$ products of Demushkin groups with pro-$p$-cyclic amalgam cannot give rise to a 1-cyclotomic oriented pro-$p$ group, and thus do not occur as maximal pro-$p$ Galois groups of fields containing a root of 1 of order $p$.
We show that other cohomological obstructions which are used to detect pro-$p$ groups that are not maximal pro-$p$ Galois groups --- the quadraticity of $\Z/p\Z$-cohomology and the vanishing of Massey products --- fail with the above pro-$p$ groups.
Finally, we prove that the Mina\v c-T\^an pro-$p$ group cannot give rise to a 1-cyclotomic oriented pro-$p$ group, and we conjecture that every 1-cyclotomic oriented pro-$p$ group satisfy the strong $n$-Massey vanishing property for $n>2$.
\end{abstract}

\subjclass[2010]{Primary 12G05; Secondary 20E18, 20J06, 12F10}

\keywords{Galois cohomology, maximal pro-$p$ Galois groups, Cyclotomic oriented pro-$p$ groups, absolutely torsion-free pro-$p$ groups, Massey products}

\maketitle

\section{Introduction}
\label{sec:intro}

Let $p$ be a prime number, and let $1+p\Z_p$ denote the pro-$p$ group of principal units of the ring of $p$-adic integers $\Z_p$ --- namely, $1+p\Z_p=\{1+p\lambda\:\mid\:\lambda\in\Z_p\}$.
An {\sl oriented pro-$p$ group} is a pair $(G,\theta)$ consisting of a pro-$p$ group $G$ and a morphism of pro-$p$ groups $\theta\colon G\to1+p\Z_p$, called an {\sl orientation} of $G$ (see \cite{qw:cyc}; oriented pro-$p$ groups were introduced by I.~Efrat in \cite{efrat:small}, with the name ``cyclotomic pro-$p$ pairs'').
An oriented pro-$p$ group $(G,\theta)$ gives rise to the continuous $G$-module $\Z_p(\theta)$, which is equal to $\Z_p$ as an abelian pro-$p$ group, and which is endowed with the continuous $G$-action defined by
\[
 g\cdot \lambda=\theta(g)\cdot \lambda\qquad\text{for all }g\in G\text{ and }\lambda\in\Z_p(\theta).
\]

An oriented pro-$p$ group $(G,\theta)$ is said to be {\sl Kummerian} if the following cohomological condition is satisfied:
for every $n\geq1$ the natural morphism
\begin{equation}\label{eq:kummer intro}
 \rmH^1(G,\Z_p(\theta)/p^n\Z_p(\theta))\longrightarrow\rmH^1(G,\Z/p\Z),
\end{equation}
induced by the epimorphism of continuous $G$-modules $\Z_p(\theta)/p^n\Z_p(\theta)\twoheadrightarrow\Z/p$ is surjective (see \cite{eq:kummer}) --- here we consider $\Z/p$ as a trivial $G$-module.
Moreover, the oriented pro-$p$ group $(G,\theta)$ is said to be {\sl 1-cyclotomic} if the above cohomological condition is satisfied also for every closed subgroup of $G$ --- namely, the natural morphism \eqref{eq:kummer intro} is surjective also with $H$ instead of $G$, and the restriction $\theta\vert_H\colon H\to1+p\Z_p$ instead of $\theta$ for all closed subgroups $H$ of $G$
(in \cite{cq:galfeat,cq:1smoothBK} a 1-cyclotomic oriented pro-$p$ group is called a ``1-smooth'' oriented pro-$p$ group).
This cohomological condition was considered first by J.~Labute, who showed {\sl ante litteram} that for every Demushkin group $G$ there exists precisely one orientation which completes $G$ into a Kummerian oriented pro-$p$ group, namely, the orientation induced by the dualizing module of $G$ (see \cite{labute:demushkin}).

In case of trivial orientations, 1-cyclotomicity translates into a purely group-theoretical statement.
Namely, an oriented pro-$p$ group $(G,\mathbf{1})$ --- where $\mathbf{1}\colon G\to1+p\Z_p$ denotes the orientation which is constantly equal to 1 --- is 1-cyclotomic if, and only if, the abelianization of every closed subgroup of $G$ is a free abelian pro-$p$ group.
Pro-$p$ groups satisfying this group-theoretic condition are called {\sl absolutely torsion-free} pro-$p$ groups, and they were introduced by T.~W\"urfel in \cite{wurfel}.

The main goal of this work is to produce new examples of pro-$p$ groups which no orientations can turn into a 1-cyclotomic oriented pro-$p$ group.

\begin{thm}\label{thm:intro1}
 Let $G$ be a pro-$p$ group with pro-$p$ presentation
  \begin{equation}\label{eq:pres1}
  G =\left\langle\: x,y_1,\ldots,y_{d_1},z_1,\ldots,z_{d_2}\:\mid\: r_1=r_2=1 \:\right\rangle,
\end{equation}
where $d_1,d_2$ are two positive odd integers, and either:
\begin{itemize}
 \item[(1.1.a)]  $d_1+d_2\geq4$ and
 \[\begin{split}
  r_1 &=[x,y_1][y_2,y_3]\cdots[y_{d_1-1},y_{d_1}], \\
  r_2 &=[x,z_1][z_2,z_3]\cdots[z_{d_2-1},z_{d_2}];
\end{split} \]
\item[(1.1.b)] or $p$ is odd and 
\[\begin{split}
r_1 &=y_1^p[y_1,x][y_2,y_3]\cdots[y_{d_1-1},y_{d_1}],\\
r_2 &= z_1^p[z_1,x][z_2,z_3]\cdots[z_{d_2-1},z_{d_2}].   \end{split}\]
\end{itemize}
Then there are no orientations $\theta\colon G\to1+p\Z_p$ such that the oriented pro-$p$ group $(G,\theta)$ is 1-cyclotomic.
\end{thm}

It is worth underlining that the pro-$p$ groups described in Theorem~\ref{thm:intro1} are amalgamated free pro-$p$ products of two Demushkin groups --- the subgroup generated by $x,y_1,\ldots,y_{d_1}$ and the subgroup generated by $x,z_1,\ldots,z_{d_2}$ ---, with pro-$p$-cyclic amalgam, generated by $x$.
Despite Demushkin groups and their free pro-$p$ products are some of the (extremely few) examples of pro-$p$ groups which are known to give rise to 1-cyclotomic oriented pro-$p$ groups, the presence of a pro-$p$-cyclic amalgam is enough to lose 1-cyclotomicity.

Oriented pro-$p$ groups satisfying 1-cyclotomicity have great prominence in Galois theory.
Given a field $\K$, let $\bar\K_s$ and $\K(p)$ denote respectively the separable closure of $\K$, and the compositum of all finite Galois $p$-extensions of $\K$.
The {\sl maximal pro-$p$ Galois group of $\K$}, denoted by $G_{\K}(p)$, is the maximal pro-$p$ quotient of the absolute Galois group $\Gal(\bar\K_s/\K)$ of ${\K}$, and it coincides with the Galois group of the Galois extension $\K(p)/\K$.
Detecting maximal pro-$p$ Galois groups among pro-$p$ groups, are crucial problems in Galois theory.
Already the pursuit of concrete examples of pro-$p$ groups which do not occur as maximal pro-$p$ Galois groups of fields is already considered a very remarkable challenge (see \cite[\S~25.16]{friedjarden}, and, e.g., \cites{BLMS,cem,cq:noGal,sz:raags,BQW}). 

The maximal pro-$p$ Galois group $G_{\K}(p)$ of a field $\K$ containing a root of 1 of order $p$ gives rise to the oriented pro-$p$ group $(G_{\K}(p),\theta_{\K})$, where $$\theta_{\K}\colon G_{\K}(p)\longrightarrow 1+p\Z_p$$ denotes the {\sl pro-$p$ cyclotomic character} (see Example~\ref{ex:Galois} below).
By Kummer theory, the oriented pro-$p$ group $(G_{\K}(p),\theta_{\K})$ is 1-cyclotomic (see \cite[p.~131]{labute:demushkin} and \cite[\S~4]{eq:kummer}) --- in case $p=2$ we need to assume further that $\sqrt{-1}\in\K$.
Therefore, a pro-$p$ group which cannot complete into a 1-cyclotomic oriented pro-$p$ group does not occur as the maximal pro-$p$ group of a field containing a root of 1 of order $p$ --- and hence neither as the absolute Galois group of any field (see, e.g., \cite[Rem.~3.3]{cq:noGal}).
Hence, the following corollary may be deduced directly from Theorem~\ref{thm:intro1}.

\begin{cor}\label{cor:noGal}
A pro-$p$ group $G$ as in Theorem~\ref{thm:intro1} does not occur as the maximal pro-$p$ Galois group of any field containing a root of 1 of order $p$ (and also $\sqrt{-1}$ if $p=2$).
Hence, $G$ does not occur as the absolute Galois group of any field.
\end{cor}

In the recent past, other cohomological properties have been used to study maximal pro-$p$ Galois groups --- and to find examples of pro-$p$ groups which do not occur as maximal pro-$p$ Galois groups.
 By the Norm Residue Theorem --- proved by M.~Rost and V.~Voevodsky, with the contribution by Ch.~Weibel, see \cite{voev,HW:book} --- one knows that if $\K$ is a field containing a root of 1 of order $p$, the {\sl $\Z/p$-cohomology algebra} $\bfH^\bullet(G_{\K}(p),\Z/p\Z)$, endowed with the {\sl cup-product}
 \[
  \textvisiblespace\smallsmile\textvisiblespace\colon \rmH^m(G_{\K}(p),\Z/p\Z)\times\rmH^n(G_{\K}(p),\Z/p\Z)\longrightarrow
  \rmH^{m+n}(G_{\K}(p),\Z/p\Z),
 \]
is {\sl quadratic}, i.e., its ring structure is completely determined by the 1st and the 2nd cohomology groups (see, e.g., \cite[\S~2]{cq:bk}). 
Moreover, it was shown by E.~Matzri that if $\K$ is a field containing a root of 1 of order $p$, then $G_{\K}(p)$ satisfies the 
{\sl triple Massey vanishing property} (see \cite{idoeli} and references therein) --- for an overview on Massey products in Galois cohomology see \cite{mt:massey}. 
These two cohomological properties were used to find examples of pro-$p$ groups which do not occur as maximal pro-$p$ Galois groups of fields containing a root of 1 of order $p$, for example in \cite[\S~8]{cem} and in \cite[\S~7]{mt:massey}.

We prove that the pro-$p$ groups described in Theorems~\ref{thm:intro1} cannot be ruled out as maximal pro-$p$ Galois groups employing the above two cohomological obstructions.

\begin{prop}\label{prop:properties intro}
 Let G be a pro-$p$ group as in Theorem~\ref{thm:intro1}.
\begin{itemize}
\item[(i)] The $\Z/p$-cohomology algebra $\bfH^\bullet(G,\Z/p\Z)$ is quadratic.
\item[(ii)] The pro-$p$ group $G$ satisfies the cyclic $p$-Massey vanishing property --- namely, the $p$-fold Massey product 
$$\langle\underbrace{\alpha,\ldots,\alpha}_{p\text{ times}}\rangle$$ contains 0 for every $\alpha\in\rmH^1(G,\Z/p\Z)$.
\item[(iii.a)] If $G$ is as in {\rm (1.1.a)}, then $G$ satisfies the 3- and the strong 4-Massey vanishing property.
\item[(iii.b)] If $G$ is as in {\rm (1.1.b)} and $p>3$ then $G$ satisfies the 3- and the strong 4-Massey vanishing property.
 \end{itemize}
\end{prop}

(We recall the basic notions on Massey products in Galois cohomology in \S~\ref{ssec:massey} below.)
Hence, Corollary~\ref{cor:noGal} provides brand new examples of pro-$p$ groups which do not occur as maximal pro-$p$ Galois groups of fields containing a root of 1 of order $p$, and as absolute Galois groups.
Moreover, we remark that the relations which define the pro-$p$ groups described in Theorem~\ref{thm:intro1} are rather ``elementary'' --- just elementary commutators of generator times, possibly, the $p$-power of a generator ---, unlike the examples provided in \cite{BLMS,cem,mt:massey,cq:noGal}, where the relations involve higher commutators.

Finally, we focus on the {\sl Mina\v{c}-T\^an pro-$p$ group}, i.e., the pro-$p$ group $G$ with pro-$p$ presentation
 \[ G=\langle\:x_1,\ldots,x_5\:\mid\:[[x_1,x_2],x_3][x_4,x_5]=1\:\rangle.
 \]
In \cite[\S~7]{mt:massey}, J.~Mina\v{c} and N.D.~T\^an showed that $G$ does not satisfy the 3-Massey vanishing property, and thus it does not occur as the maximal pro-$p$ Galois group of any field containing a root of 1 of order $p$.
We prove that $G$ cannot complete into a 1-cyclotomic oriented pro-$p$ group.

\begin{thm}\label{thm:MinacTan G}
 Let $p$ be an odd prime.
 Then there are no orientations turning the Mina\v{c}-T\^an pro-$p$ group into a 1-cyclotomic oriented pro-$p$ group.
\end{thm}

Theorem~\ref{thm:MinacTan G} has been proved independently by I.~Snopce and P.~Zalesski\u{\i} (unpublished).
Theorem~\ref{thm:MinacTan G} provides a negative answer to the question posed in \cite[Rem.~3.7]{qw:cyc} ---  namely, the 
Mina\v{c}-T\^an pro-$p$ group may be ruled out as a maximal pro-$p$ Galois group of a field containing a root of 1 of order $p$ (and thus as an absolute Galois group) in a ``Massey-free'' way.

Altogether, 1-cyclotomicity of oriented pro-$p$ groups provides a rather powerful tool studying maximal pro-$p$ Galois groups, and it succeeds in detecting pro-$p$ groups which are not maximal pro-$p$ Galois groups when other methods fail, as underlined above.
We believe that further investigations in this direction will lead to new obstructions for the realization of pro-$p$ groups as maximal pro-$p$ Galois group.

Actually, Theorem~\ref{thm:MinacTan G}, and the main result in \cite{sz:raags} (see in particular \cite[p.~1907]{sz:raags}), may lead to the suspect that 1-cyclotomicity is a more restrictive condition in comparison with the vanishing of Massey products.
Thus, we formulate the following conjecture.

\begin{conj}\label{conj}
 Let $(G,\theta)$ be an oriented pro-$p$ group, such that $\Img(\theta)\subseteq1+4\Z_2$ if $p=2$.
If $(G,\theta)$ is 1-cyclotomic, then the pro-$p$ group $G$ satisfies the 3-Massey vanishing property;
if moreover $G$ is finitely generated, then $G$ satisfies the strong $n$-Massey vanishing property for every $n\geq 3$. 
\end{conj}

After the publication on the arXiv of an earlier version of this paper, A.~Merkurjev and F.~Scavia proved the first statement of Conjecture~\ref{conj} --- see \cite[Thm.~1.3]{MerScaH90} ---; while, on the other hand, there are 1-cyclotomic oriented pro-$2$ groups $(G,\theta)$ such that $\Img(\theta)\subseteq1+4\Z_2$, where $G$ is not finitely generated and does not satisfy the strong 4-Massey vanishing property --- see \cite[Thm.~1.6]{MerSca1}.
In particular, \cite[Thm.~1.3]{MerScaH90} implies Theorem~\ref{thm:MinacTan G} (see also \cite[Rem.~6.3]{MerScaH90}).


{\small
\begin{acknowledgment} \rm
The author wishes to thank N.D.~T\^an, who pointed out to the author the possible importance
of \cite[Prop.~6]{labute:demushkin}, some years ago; and P.~Guillot, I.~Snopce, P.~Wake, and P.~Zalesski\u{\i} for several inspiring discussions.

\noindent
This paper was deeply influenced also by the discussions occurred during the following events: 
the workshop ``Nilpotent Fundamental Groups'', Jun.~2017, hosted by the Banff International Research Station (Banff, Canada);
the conference ``Group Theory Days in D\"usseldorf'', Jan.~2020, hosted by the University of D\"usseldorf (D\"usseldorf, Germany);
the conference ``New Trends around Profinite Groups 20'', Sept.~2021, hosted by the Centro Internazionale di Ricerca Matematica  (Levico Terme, Italy);
and the conference ``New Trends around Profinite Groups 22'', Sept.~2022, hosted by the Riemann International School of Mathematics (Varese, Italy).
So, the author is gratheful to the organizers and the hosting institutions of these events.

\noindent 
Last, but not least, the author thanks the anonymous referee, for several helpful comments.
\end{acknowledgment}
}


\section{Oriented pro-$p$ groups and cohomology}\label{sec:or}

\subsection{Notation and preliminaries}

 Throughout the paper, every subgroup of a pro-$p$ group is tacitly assumed to be {\sl closed} with respect to the pro-$p$ topology. 
Therefore, sets of generators of pro-$p$ groups, and presentations, are to be intended in the topological sense.

Given a pro-$p$ group $G$, we denote the closed commutator subgroup of $G$ by $G'$ --- namely, $G'$ is the closed normal subgroup generated by commutators $$[h,g]= h^{-1}\cdot h^g=h^{-1}\cdot g^{-1}hg,\qquad g,h\in G.$$ 
The {\sl Frattini subgroup} of $G$ is denoted by $\Phi(G)$ --- namely, $\Phi(G)$ is the closed normal subgroup generated by $G'$ and by $p$-powers $g^p$, $g\in G$ (cf., e.g., \cite[Prop.~1.13]{ddsms}).
A minimal generating set of $G$ gives rise to a basis of the $\Z/p\Z$-vector space $G/\Phi(G)$, and conversely (cf., e.g., \cite[Prop.~1.9]{ddsms}).

Finally, we denote the abelianization $G/G'$ of $G$ by $G^{\ab}$. 
Throughout the paper, we will make use of the following straightforward fact.

\begin{fact}\label{fact:gen Gab}
Let $G$ be a finitely generated pro-$p$ group.
Then a subset $\{x_1,\ldots,x_d\}$ of $G$ is a minimal generating set of $G$ if, and only if, the subset
 $\{x_1G',\ldots,x_dG'\}$ of $G^{\ab}$ is a minimal generating set of the abelian pro-$p$ group $G^{\ab}$.
\end{fact}

\subsection{Oriented pro-$p$ groups}\label{ssec:oriented}

Let $G$ be a pro-$p$ group.
An orientation $\theta\colon G\to1+p\Z_p$ is said to be {\sl torsion-free} if $p$ is odd, or if $p=2$ and $\Img(\theta)\subseteq1+4\Z_2$.
Observe that one may have an oriented pro-$p$ group $(G,\theta)$ where $G$ has non-trivial torsion and $\theta$ torsion-free (e.g., if $G\simeq\Z/p$ and $\Img(\theta)=\{1\}$).

A morphism of oriented pro-$p$ groups $(G_1,\theta_1)\to(G_2,\theta_2)$,
is a homomorphism of pro-$p$ groups $\phi\colon G_1\to G_2$ such that $\theta_1=\theta_2\circ\phi$ (cf. \cite[\S~3, p.~1888]{qw:cyc}).

Within the family of oriented pro-$p$ groups one has the following constructions.
Let $(G,\theta)$ be an oriented pro-$p$ group.
\begin{itemize}
 \item[(a)] If $N$ is a normal subgroup of $G$ contained in $\Ker(\theta)$, one has the oriented pro-$p$ group
$(G/N,\theta_{/N})$,  where $\theta_{/N}\colon G/N\to1+p\Z_p$ is the orientation such that $\theta_{/N}\circ\pi=\theta$, with $\pi\colon G\to G/N$ the canonical projection.
 \item[(b)] If $A$ is an abelian pro-$p$ group (written multiplicatively), one has the oriented pro-$p$ group
$  A\rtimes(G,\theta)=(A\rtimes G,\tilde\theta)$, with action given by $gag\inv=a^{\theta(g)}$ for every $g\in G$, $a\in A$,
 where the orientation $\tilde\theta\colon A\rtimes G\to1+p\Z_p$ is the composition of the canonical projection 
 $A\rtimes G\to G$ with $\theta$.
\end{itemize}


\subsection{Kummerianity and 1-cyclotomicity}\label{ssec:kummer}

Let $(G,\theta)$ be an oriented pro-$p$ group.
Observe that the $G$-action on the $G$-module $\Z_p(\theta)/p\Z_p(\theta)$ is trivial, as $\theta(g)\equiv 1\bmod p$ for all $g\in G$.
Thus, $\Z_p(\theta)/p\Z_p(\theta)$ is isomorphic to $\Z/p$ as a trivial $G$-module.

An oriented pro-$p$ group $(G,\theta)$ comes endowed with the distinguished subgroup
\[
 K_\theta(G)=\left\langle\:{}^{g}h\cdot h^{-\theta(g)}\:\mid\:g\in G,\:h\in\Ker(\theta)\:\right\rangle
\]
(cf. \cite[\S~3]{eq:kummer}). 
The subgroup $K_\theta(G)$ is normal in $G$, and it is contained in both $\Ker(\theta)$ and $\Phi(G)$.
On the other hand, $K_\theta(G)\supseteq\Ker(\theta)'$, so that $\Ker(\theta)/K_\theta(G)$ is an abelian pro-$p$ group.
Moreover, if $\theta$ is a torsion-free orientation, $G/\Ker(\theta)\simeq\Img(\theta)$ is torsion-free,
and thus either trivial or isomorphic to $\Z_p$.
Hence, the epimorphism $G\twoheadrightarrow G/\Ker(\theta)$ splits, and since 
$ghg^{-1}\equiv h^{\theta(g)}\bmod K_\theta(G)$ for every $g\in G$ and $h\in \Ker(\theta)$, one concludes that
\[
 \left(G/K_{\theta}(G),\theta_{/K_{\theta}(G)}\right)\simeq 
 \dfrac{\Ker(\theta)}{K_\theta(G)}\rtimes \left(G/\Ker(\theta),\theta_{/\Ker(\theta)}\right)
\]
(cf., e.g., \cite[\S~2.2, eq.~(2.1)]{qw:bogomolov}).

One has the following result relating the subgroup $K_\theta(G)$ and the surjectivity of the maps \eqref{eq:kummer intro}
(cf. \cite[Thm.~7.1]{eq:kummer}, see also \cite[Prop.~2.6]{qw:bogomolov}).

\begin{prop}\label{prop:kummer}
Let $(G,\theta)$ be an oriented pro-$p$ group with $\theta$ a torsion-free orientation.
The following are equivalent.
\begin{itemize}
 \item[(i)] The natural map 
 \[
  \rmH^1(G,\Z_p(\theta)/p^n\Z_p(\theta))\longrightarrow \rmH^1(G,\Z/p\Z),
 \]
is surjective for every positive integer $n$.
\item[(ii)] The quotient $\Ker(\theta)/K_\theta(G)$ is a free abelian pro-$p$ group.
\end{itemize} 
\end{prop}

If an oriented pro-$p$ group $(G,\theta)$ with torsion-free orientation satisfies the above two equivalent properties, then it is said to be Kummerian.
Moreover, $(G,\theta)$ is said to be 1-cyclotomic if $(H,\theta\vert_H)$ is Kummerian for every subgroup $H\subseteq G$.

\begin{rem}\label{rem:1cyc def}\rm
The original definition of 1-cyclotomic oriented pro-$p$ group requires only that for every {\sl open} subgroup $U$ of $G$,
 the oriented pro-$p$ group $(U,\theta\vert_U)$ is Kummerian (cf. \cite[\S~1]{qw:cyc}).
 By a continuity argument, this is enough to imply that the oriented pro-$p$ group $(H,\theta\vert_H)$ is Kummerian also for every subgroup $H\subseteq G$ (cf. \cite[Cor.~3.2]{qw:cyc}).
\end{rem}

If $(G,\mathbf{1})$ is an oriented pro-$p$ group with $\mathbf{1}\colon G\to1+p\Z_p$ the orientation constantly equal to 1, then $K_{\mathbf{1}}(G)=G'$, and by Proposition~\ref{prop:kummer} $(G,\theta)$ is Kummerian if, and only if, $G/G'=\Ker(\mathbf{1})/K_{\mathbf{1}}(G)$ is a free abelian pro-$p$ group (cf. \cite[Ex.~3.5--(1)]{eq:kummer}).
Hence, $(G,\mathbf{1})$ is 1-cyclotomic if, and only if, $H/H'$ is a free abelian pro-$p$ group for every subgroup $H\subseteq G$, i.e., $G$ is absolutely torsion-free (cf. \cite[Rem.~2.3]{cq:galfeat}).


\subsection{Examples}\label{ssec:ex}

\begin{exam}\label{ex:Galois}\rm
Let $\K$ be a field containing a root of 1 of order $p$, and also $\sqrt{-1}$ if $p=2$.
Then the pro-$p$ cyclotomic character $\theta_{\K}$ of $G_{K}(p)$ --- induced by the action of $G_{\K}(p)$ on the roots of 1 of $p$-power order contained in $\K(p)$ --- has image contained in $1+p\Z_p$.
Observe that $\Img(\theta_{\K})=1+p^f\Z_p$, where $f\in\dbN\cup\{\infty\}$ is maximal such that $\K$ contains a root of 1 of order $p^f$ (if $f=\infty$, we set $p^\infty=0$).
In particular, $\theta_{\K}$ is a torsion-free orientation.
The module $\Z_p(\theta_{\K})$ is called the {\sl 1st Tate twist of $\Z_p$} (cf., e.g., \cite[Def.~7.3.6]{nsw:cohn}).

\noindent 
For the convenience of the reader, here we recall J.~Labute's argument to show that the oriented pro-$p$ group $(G_{\K}(p),\theta_{\K})$ is Kummerian --- and thus also 1-cyclotomic, as every subgroup $H\subseteq G_{\K}(p)$ is the maximal pro-$p$ Galois group of an extension of $\K$, with pro-$p$ cyclotomic character $\theta_{\K}\vert_H$ ---, as it is presented in \cite[p.~131]{labute:demushkin} (where the module $\Z_p(\theta_{\K})$ is denoted by $I=I(\chi')$).
For every $n\geq1$ one has an isomorphism of continuous $G_{\K}(p)$-modules 
$$\Z_p(\theta_{\K})/p^n\Z_p(\theta_\K)\simeq \mu_{p^n}=\left\{\:\zeta\in\K(p)\:\mid\:\zeta^{p^n}=1\:\right\}.$$
Let $\K^\times$ and $\K(p)^\times$ denote the multiplicative groups of units of $\K$ and $\K(p)$ respectively.
By Hilbert 90, the short exact sequence of continuous $G_{\K}(p)$-modules 
\begin{equation}\label{eq:ses mods K}
 \xymatrix{\{1\}\ar[r] &\mu_{p^n} \ar[r] & \K(p)^\times \ar[r]^-{\textvisiblespace^{p^n}} & \K(p)^\times \ar[r] &\{1\} }
\end{equation}
 induces a commutative diagram
\[
 \xymatrix{\K^\times/(\K^\times)^{p^n} \ar[r]\ar@{->>}[d]^-{\pi_n} & H^1(G_{\K}(p),\mu_{p^n})\ar[r]^-{\sim}\ar[d] & H^1\left(G_{\K}(p),\Z_p(\theta_{\K})/p^n\Z_p(\theta_{\K})\right)\ar[d] \\
 \K^\times/(\K^\times)^{p} \ar[r]^-{\sim} & H^1(G_{\K}(p),\mu_{p})\ar[r]^-{\sim} & H^1\left(G_{\K}(p),\Z/p\Z\right) }
\]
where the left-side vertical arrow $\pi_n$ and the central vertical arrow are induced by the $p^{n-1}$-th power map $\textvisiblespace^{p^n}\colon\K(p)^\times\to\K(p)^\times$,
and the right-side vertical arrow is induced by the epimorphism of continuous $G_{\K}(p)$-modules $\Z_p(\theta_{\K})/p^n\Z_p(\theta_{\K})\twoheadrightarrow\Z/p\Z$.
Since the map $\pi_n$ is surjective, also the other vertical arrows are surjective. 
\end{exam}

\begin{exam}\label{ex:free}\rm
 Let $G$ be a free pro-$p$ group. 
Then the oriented pro-$p$ group $(G,\theta)$ is 1-cyclotomic for any orientation $\theta\colon G\to1+p\Z_p$ (cf. \cite[\S~2.2]{qw:cyc}). 
\end{exam}

\begin{exam}\label{exam:demushkin}\rm
Let $G$ be an infinite Demushkin group (cf., e.g., \cite[Def.~3.9.9]{nsw:cohn}).
 By \cite[Thm.~4]{labute:demushkin}, $G$ comes endowed with a canonical orientation $\chi\colon G\to1+p\Z_p$ which is the only one completing $G$ into a 1-cyclotomic oriented pro-$p$ group.
 In particular, if $d=\dim(\rmH^1(G,\Z/p\Z))$ is even (which is always the case if $p\neq 2$), then $G$ has a presentation
 \[
  G=\left\langle\:x_1,\ldots,x_d\:\mid\:x_1^{p^f}[x_1,x_2]\cdots[x_{d-1},x_d]=1\:\right\rangle,
 \]
with $f\geq1$ ($f\geq2$ if $p=2$).
In this case $\chi(x_2)=(1-p^f)^{-1}$ and $\chi(x_i)=1$ for $i\neq2$.
\end{exam}

\begin{exam}\label{ex:thetabelian}\rm
Let $(G,\theta)$ be an oriented pro-$p$ group, with $\theta$ a torsion-free orientation.
The oriented pro-$p$ group $(G,\theta)$ is said to be {\sl $\theta$-abelian} if the subgroup $K_\theta(G)$ is trivial and if $\Ker(\theta)$ is a free abelian pro-$p$ group --- in this case $G$ is a free abelian-by-cyclic pro-$p$ group, i.e., $$G\simeq \Ker(\theta)\rtimes \dfrac{G}{\Ker(\theta)}$$ (cf. \cite[Rem.~ 2.2]{qw:bogomolov}).
In other words, $G$ has a presentation
\[
 G=\left\langle\:x_0,x_i\:\mid\:i\in I,\: x_i^{x_0}=x_i^{\theta(x_0)^{-1}},[x_i,x_j]=1\:\forall\:i,j\in I\:\right\rangle,
\]
for some set of indices $I$, and $\theta(x_i)=1$ for all $i\in I$ (cf. \cite[Prop.~3.4]{cq:bk}).
A $\theta$-abelian oriented pro-$p$ group $(G,\theta)$ is Kummerian by Proposition~\ref{prop:kummer}, as by definition $K_\theta(G)$ is trivial and $\Ker(\theta)$ is a free abelian pro-$p$ group.
Moreover, if $H$ is a subgroup of $G$, then one has
\[
 H\simeq (H\cap\Ker(\theta))\rtimes\frac{H}{\Ker(\theta\vert_H)}
\]
(cf. \cite[Rem.~2.4]{qw:bogomolov}), so that the oriented pro-$p$ group $(H,\theta\vert_H)$ is $\theta\vert_H$-abelian, and thus Kummerian, and consequently $(G,\theta)$ is 1-cyclotomic.
\end{exam}

One has the following result to check whether an oriented pro-$p$ group is Kummerian (cf. \cite[Prop.~2.6, Prop.~3.6]{qw:bogomolov}).

\begin{prop}\label{prop:kummer thetabelian}
Let $(G,\theta)$ be an oriented pro-$p$ group, with $\theta$ a torsion-free orientation.
Then $(G,\theta)$ is Kummerian if, and only if, there exists a normal subgroup $N$ of $G$ such that $N\subseteq \Ker(\theta)\cap\Phi(G)$, and the quotient $(G/N,\theta_{/N})$,
is a $\theta_{/N}$-abelian oriented pro-$p$ group. 
If such a normal subgroup $N$ exists, then $N=K_\theta(G)$.
\end{prop}


\subsection{Kummerianity and 1-cocyles}\label{ssec:hered}
Let $(G,\theta)$ be an oriented pro-$p$ group.
Recall that for $n\in\dbN\cup\{\infty\}$, a {\sl 1-cocycle} $c\colon G\to\Z_p(\theta)/p^n\Z_p(\theta)$ is a continuous map satisfying 
\begin{equation}\label{eq:1cocycle}
 c(gh)=c(g)+{\overline{\theta(g)}}c(h)\qquad\text{for every }g,h\in G,
\end{equation}
where $\overline{\theta(g)}$ denotes the image of $\theta(g)$ modulo $p^n$. 
From \eqref{eq:1cocycle} one deduces
\begin{equation}\label{eq:1cocycle comm}
 c([g,h])=\overline{\theta(gh)^{-1}}\left(c(g)(1-\overline{\theta(h)})-c(h)(1-\overline{\theta(g)})\right).
\end{equation}
For $n\in\dbN\cup\{\infty\}$, every element of $\rmH^1(G,\Z_p(\theta)/p^n\Z_p(\theta))$ is represented by a 1-cocycle $c\colon G\to\Z_p(\theta)/p^n\Z_p(\theta)$.
The following result is due to J.~Labute (cf. \cite[Prop.~6]{labute:demushkin}).

\begin{lem}\label{lem:labute inf}
Let $(G,\theta)$ be a finitely generated oriented pro-$p$ group with torsion-free orientation, and let $\calX=\{x_1,\ldots,x_d\}$ be a minimal generating set of $G$.
The following are equivalent.
\begin{itemize}
 \item[(i)] $(G,\theta)$ is Kummerian.
 \item[(ii)] For all $n\in\dbN\cup\{\infty\}$ and for any sequence $\lambda_1,\ldots,\lambda_d$ of elements of $\Z_p(\theta)/p^n\Z_p(\theta)$ there exists a continuous 1-cocycle $G\to\Z_p(\theta)/p^n\Z_p(\theta)$ satisfying $c(x_i)=\lambda_i$ for all $i=1,\ldots,d$.
\end{itemize}
\end{lem}

\begin{prop}\label{prop:quot}
Let $G$ be a finitely generated pro-$p$ group, and let $(G,\theta)$ be a Kummerian oriented pro-$p$ group with torsion-free orientation.
If $N$ is a normal subgroup of $G$ such that $N\subseteq\Ker(\theta)$ and the restriction map
\[
\mathrm{res}_{G,N}^1\colon \rmH^1(G,\Z/p\Z)\longrightarrow\rmH^1(N,\Z/p\Z)^G                                                                                                                                                                                               \]
 is surjective, then also $(G/N,\theta_{/N})$ is Kummerian.
\end{prop}

In order to prove Proposition~\ref{prop:quot} we need the following fact, whose proof --- rather straightforward --- is left to the reader.

\begin{fact}\label{fact:cocycle}
Let $G$ be a finitely generated pro-$p$ group, and let $(G,\theta)$ be an oriented pro-$p$ group with torsion-free orientation.
\begin{itemize}
 \item[(i)] If $c\colon G\to \Z_p(\theta)/p^n\Z_p(\theta)$ is a continuous 1-cocycle, with $n\in\dbN\cup\{\infty\}$,
 then $c^{-1}(0)\cap\Ker(\theta)$ is a normal subgroup of $G$. 
 \item[(ii)] Let $N\subseteq G$ be a normal subgroup satisfying $N\subseteq\Ker(\theta)$, with canonical projection $\pi\colon G\to G/N$. For $n\in\dbN\cup\{\infty\}$ one has the following:
\begin{itemize}
 \item[(a)] a continuous 1-cocycle $c\colon G\to\Z_p(\theta)/p^n\Z_p(\theta)$ satisfying $c\vert_N\equiv0$ induces a continuous 1-cocycle $\bar c \colon G/N\to\Z_p(\theta_{/N})/p^n\Z_p(\theta_{/N})$ such that $c=\bar c\circ\pi$;
 \item[(b)] a continuous 1-cocycle $\bar c \colon G/N\to\Z_p(\theta_{/N})/p^n\Z_p(\theta_{/N})$ induces a continuous 1-cocycle $c\colon G\to\Z_p(\theta)/p^n\Z_p(\theta)$ satisfying $c\vert_N\equiv0$ and $c=\bar c\circ\pi$.
\end{itemize}
\end{itemize}
\end{fact}

\begin{proof}[Proof of Proposition~\ref{prop:quot}]
Set $\bar G=G/N$ and $\bar\theta=\theta_{/N}$.
 For every $n\geq1$, the canonical projection $\pi\colon G\to\bar G$ induces the inflation maps
\begin{equation}
\begin{split}
 f_n &\colon \rmH^1(\bar G,\Z_p(\bar\theta )/p^n\Z_p(\bar\theta ))\longrightarrow \rmH^1(G,\Z_p(\theta)/p^n\Z_p(\theta)), \\
 f&\colon \rmH^1(\bar G,\Z/p\Z)\longrightarrow \rmH^1(G,\Z/p\Z) ,
\end{split}\end{equation}
 which are injective by \cite[Prop.~1.6.7]{nsw:cohn}.
Also, the epimorphisms (respectively of continuous $\bar G$-modules and continuous $G$-modules) 
$\Z_p(\bar\theta )/p^n\Z_p(\bar\theta )\to\Z/p\Z$ and $\Z_p(\theta)/p^n\to\Z/p\Z$ induce, respectively, the morphisms 
\begin{equation}
\begin{split}
\tau_n^N&\colon \rmH^1(\bar G,\Z_p(\theta)/p^n)\longrightarrow \rmH^1(\bar G,\Z/p),\\
\tau_n&\colon \rmH^1(G,\Z_p(\theta)/p^n)\longrightarrow \rmH^1(G,\Z/p).\end{split}\end{equation}
Altogether, by \cite[Prop.~1.5.2]{nsw:cohn} one has the commutative diagram
 \[
  \xymatrix@C=1.2truecm{ \rmH^1\left(\bar G ,\Z_p(\bar\theta )/p^n\Z_p(\bar\theta )\right)\ar[r]^-{\tau_n^N}\ar[d]^-{f_n} & \rmH^1(\bar G ,\Z/p\Z)\ar[d]^-{f} \\
\rmH^1\left(G,\Z_p(\theta)/p^n\Z_p(\theta)\right)\ar@{->>}[r]^-{\tau_n} & \rmH^1(G,\Z/p\Z) }
 \]
Since $(G,\theta)$ is Kummerian, $\tau_n$ is surjective for every $n\geq1$.
 Given  $\bar \beta\in \rmH^1(\bar G ,\Z/p\Z)$, $\bar\beta\neq0$,
our goal is to find $\alpha\in \rmH^1(\bar G ,\Z_p(\bar\theta )/p^n\Z_p(\bar\theta ))$ such that $\bar\beta=\tau_n^N(\alpha)$.

Set $\beta=\bar\beta\circ\pi=f(\bar\beta)$.
Then $\beta\colon G\to\Z/p$ is a non-trivial continuous homomorphism
such that $\Ker(\beta)\supseteq N$.
By hypothesis, the morphism $ N/N^p[G,N]\to G/\Phi(G)$
induced by the inclusion $N\hookrightarrow G$, and dual to $\mathrm{res}_{G,N}^1$, is injective.
Thus, one may find a minimal generating set $\calX$ of $G$ such that $\calY=\calX\cap N$ generates $N$ as a normal subgroup of $G$.
By Lemma~\ref{lem:labute inf}, there exists a continuous 1-cocycle $c\colon G\to\Z_p(\theta)/p^n\Z_p(\theta)$
satisfying 
$$c(x)\equiv\beta(x)\mod p\Z_p(\theta)\qquad\text{for every }x\in \calX$$ 
--- i.e., $\tau_n([c])=\beta$, where $[c]\in \rmH^1(G,\Z_p(\theta)/p^n\Z_p(\theta))$ denotes the cohomology class of $c$ ---, and moreover $c(x)=0$ for every $x\in \calY$.
Therefore, by Fact~\ref{fact:cocycle}--(i), the restriction 
$$c\vert_N\colon N\longrightarrow\Z_p(\theta)/p^n\Z_p(\theta)$$ is the map constantly equal to 0.
By Fact~\ref{fact:cocycle}--(ii), $c$ induces a continuous 1-cocycle 
$$\bar c\colon \bar G \longrightarrow\Z_p(\bar \theta)/p^n\Z_p(\bar \theta)$$ such that $\bar c\circ\pi=c$,
and $[c]=f_n([\bar c])$, where $[\bar c]\in \rmH^1(\bar G ,\Z_p(\bar\theta)/p^n\Z_p(\bar \theta))$ denotes the cohomology class of $\bar c$.
Altogether, one has 
\[
 f(\bar \beta)=\beta=\tau_n([c])=\tau_n\circ f_n([\bar c])=f\circ\tau_n^N([\bar c]).\]
Since $f$ is injective, one obtains $\bar\beta=\tau_n^N([\bar c])$.
\end{proof}

\begin{rem}\rm
 Proposition~\ref{prop:quot} may be proved also in a purely group-theoretic way, see \cite[Rem.~3.9]{BQW}.
\end{rem}

\section{The $\Z/p\Z$-cohomology of $G$}\label{sec:new}

The purpose of this section is to prove the first statement of Proposition~\ref{prop:properties intro}, and more in general to describe the $\Z/p\Z$-cohomology algebra $\bfH^\bullet(G,\Z/p\Z)$ with $G$ as in Theorem~\ref{thm:intro1}.

\subsection{Degree 1 and 2}

Let $G$ be a pro-$p$ group.
We set the subgroup $G_{(3)}$ of $G$ as follows:
\[
 G_{(3)}=\begin{cases}
          G^p[G,G'] & \text{if }p\neq2,\\ G^4(G')^2[G,G'] & \text{if }p=2,
         \end{cases}\]
i.e., $G_{(3)}$ is the third term of the $p$-Zassenhaus filtration of $G$ (cf., e.g., \cite[\S~3.1]{cq:2relUK}).
In particular, $G_{(3)}$ is a normal subgroup of the Frattini subgroup $\Phi(G)$, and the quotient $\Phi(G)/G_{(3)}$ is a $p$-elementary abelian pro-$p$ group --- and thus also a $\Z/p$-vector space.

Recall that the cohomology group $\rmH^1(G,\Z/p\Z)$ is equal to the group of pro-$p$ group homomorphisms from $G$ to $\Z/p$, namely, one has 
 \begin{equation}\label{eq:H1 dual}
 \rmH^1(G,\Z/p\Z)=\mathrm{Hom}(G,\Z/p\Z)\simeq(G/\Phi(G))^\ast,
\end{equation}
where $\textvisiblespace^\ast$ denotes the $\Z/p$-dual (cf., e.g., \cite[Ch.~I, \S~4.2]{serre:galc}).
Thus, the dimension of $\rmH^1(G,\Z/p\Z)$ is equal to the cardinality $\mathrm{d}(G)$ of any minimal generating set of $G$.
On the other hand, the dimension of $\rmH^2(G,\Z/p\Z)$ is equal to the number $\mathrm{r}(G)$ of defining relations of $G$
(cf. \cite[Ch.~I, \S~4.3]{serre:galc}).
Moreover, if both $\rmH^1(G,\Z/p\Z)$ and $\rmH^2(G,\Z/p\Z)$ are finite, and if the cup-product yields an epimorphism 
$\rmH^1(G,\Z/p\Z)^{\otimes2}\twoheadrightarrow\rmH^2(G,\Z/p\Z)$, one has an isomorphism of elementary abelian $p$-groups
 \begin{equation}\label{eq:H2 dual}
 \xymatrix{  \left(\Phi(G)/G_{(3)}\right)^\ast \ar[r]^-{\trg} & \rmH^2(G,\Z/p\Z)}
\end{equation}
(cf. \cite[Thm.~7.3]{MPQT}).
For further properties of the cohomology of pro-$p$ groups we refer to \cite[Ch.~I, \S~4]{serre:galc} and to \cite[Ch.~III, \S~9]{nsw:cohn}.


\subsection{Amalgams}\label{ssec:amalg}
Henceforth $G$ will denote a pro-$p$ group as in Theorem~\ref{thm:intro1}.
Set
\[
 \begin{split}
  G_1 &= \langle\:x,y_1,\ldots,y_{d_1}\:\mid\:x^{\epsilon p}[x,y_1]\cdots[y_{d_1-1},y_{d_1}]=1\:\rangle,\\
  G_2 &= \langle\:x,z_1,\ldots,z_{d_2}\:\mid\:x^{\epsilon p}[x,z_1]\cdots[z_{d_2-1},z_{d_2}]=1\:\rangle,
 \end{split}
\]
with $\epsilon=0,1$ depending on whether we are considering case (1.1.a) or (1.1.b).
Then $G_1,G_2$ are Demushkin groups, and $G$ is the amalgamated free pro-$p$ product 
\begin{equation}\label{eq:amalg}
 G=G_1\amalg_X^{\hat p}G_2, 
\end{equation}
with amalgam the subgroup $X\subseteq G_1,G_2$ generated by $x$.
Observe that $X\simeq\Z_p$, as $X$ has infinite index in both $G_1,G_2$, and subgroups of infinite index of Demushkin groups are free pro-$p$ groups (cf. \cite[Ch.~I, \S~4.5, Ex.~5--(b)]{serre:galc}).
Therefore, the amalgamated free pro-$p$ product is proper, i.e., $G_1,G_2\subseteq G$ (cf. \cite{ribes:amalg}).


\subsection{Quadratic cohomology}\label{ssec:quadratic}

Let 
$$\mathcal{B}=\left\{\:\chi,\:\varphi_1,\:\ldots,\:\varphi_{d_1},\:\psi_1,\:\ldots,\:\psi_{d_2}\:\right\}$$
be the basis of $\rmH^1(G,\Z/p\Z)=\mathrm{Hom}(G,\Z/p\Z)$ dual to $\calX=\{x,y_1,\ldots,z_{d_2}\}$ --- i.e., 
\[\begin{split}
& \chi(w)=\begin{cases} 1&\text{if }w=x \\ 0 &\text{if }w=y_i,z_j \end{cases}\qquad\text{and}\\
& \varphi_i(w)=\begin{cases} \delta_{i,i'}&\text{if }w=y_{i'} \\ 0 &\text{if }w=x,z_j, \end{cases}\qquad
 \psi_j(w)=\begin{cases} \delta_{j,j'}&\text{if }w=z_{j'} \\ 0 &\text{if }w=x,y_i, \end{cases}
\end{split}\]
for every $1\leq i,i'\leq d_1$ and $1\leq j,j'\leq d_2$ (cf. \eqref{eq:H1 dual}).
With an abuse of notation, we may consider the subsets $\mathcal{B}_1=\{\chi,\varphi_1,\ldots,\varphi_{d_1}\}$,
$\mathcal{B}_2=\{\chi,\psi_1,\ldots,\psi_{d_2}\}$, and $\mathcal{B}_X=\{\chi\}$, as bases of $\rmH^1(G_1,\Z/p\Z)$, $\rmH^1(G_2,\Z/p\Z)$, and $\rmH^1(X,\Z/p\Z)$ respectively.

\begin{prop}\label{prop:quadratic}
 The algebra $\bfH^\bullet(G,\Z/p\Z)$ is quadratic.
\end{prop}

\begin{proof}
 As stated in \S~\ref{ssec:amalg}, $G=G_1\amalg_{X}^{\hat p}G_2$ is a proper amalgamated free pro-$p$ product.
 Since $\mathcal{B}_X\subseteq \mathcal{B}_1,\mathcal{B}_2$, the restriction maps
 $$\res^1_{G_i,X}\colon \rmH^1(G_i,\Z/p\Z)\longrightarrow\rmH^1(X,\Z/p\Z),\qquad \text{with }i=1,2,$$
 are surjective.
 Moreover, $\rmH^2(X,\Z/p\Z)=0$, as $X\simeq\Z_p$, and thus $\Ker(\res_{G_i,X}^2)=\rmH^2(G_i,\Z/p\Z)$ for both $i=1,2$.
 On the other hand, $\rmH^1(G_1,\Z/p\Z)$ and $\rmH^1(G_2,\Z/p\Z)$ are generated by $\chi\smallsmile\varphi_1$ and $\chi\smallsmile\psi_1$ respectively, as $G_1,G_2$ are Demushkin groups (cf., e.g., \cite[Prop.~3.9.16]{nsw:cohn}), and thus 
 \[
  \Ker(\res_{G_i,X}^2)=\rmH^2(G_i,\Z/p\Z)=\Ker(\res_{G_i,X}^1)\smallsmile\rmH^1(G_i,\Z/p\Z),\qquad \text{with }i=1,2,
 \]
as $\res_{G_1,X}^1(\varphi_1)=0$ and $\res_{G_2,X}^1(\psi_1)=0$.
Finally, Demushkin groups are well-known to yield a quadratic $\Z/p\Z$-cohomology algebra, while $\bfH^\bullet(X,\Z/p\Z)$ is obviously quadratic, as $X\simeq \Z_p$.
Therefore, we may apply \cite[Thm.~B]{qsv:quadratic}, so that also $\bfH^\bullet(G,\Z/p\Z)$ is quadratic.
\end{proof}

We describe now more in detail the structure of $\bfH^\bullet(X,\Z/p\Z)$.
By duality --- cf. \cite[Thm.~7.3]{MPQT} and \eqref{eq:H2 dual} ---, the set 
$\{\chi\smallsmile\varphi_1,\chi\smallsmile\psi_1\}$ is a basis of $\rmH^2(G,\Z/p\Z)$, and in $\rmH^2(G,\Z/p\Z)$ one has the relations
\begin{equation}\label{eq:H2 G1 a}
  \chi\smallsmile\varphi_{i'}=\chi\smallsmile\psi_{j'}=\varphi_i\smallsmile\psi_j=0
\end{equation}
for all $1\leq i,i'\leq d_1$ and $1\leq j,j'\leq d_2$, with $i',j'\neq1$, and
\begin{equation}\label{eq:H2 G1 b}
\begin{split}
 \varphi_{i}\smallsmile\varphi_{i'}=\begin{cases} (-1)^\epsilon\chi\smallsmile\varphi_1 & \text{if }2\mid i=i'-1,
\\ 0&\text{otherwise}, \end{cases}\\
\qquad
\psi_{j}\smallsmile\psi_{j'}=\begin{cases} (-1)^\epsilon\chi\smallsmile\psi_1 & \text{if }2\mid j=j'-1, 
\\ 0&\text{otherwise} \end{cases} 
\end{split}\end{equation}
(see also \cite[\S~3.2]{cq:2relUK}).

Finally, one has an exact sequence
\[
  \begin{tikzpicture}[descr/.style={fill=white,inner sep=2pt}]
        \matrix (m) [
            matrix of math nodes,
            row sep=3em,
            column sep=3em,
            text height=1.5ex, text depth=0.25ex
        ]
        {    \cdots &  \rmH^2(X,\Z/p\Z) &\\
             \rmH^3(G,\Z/p\Z) &  \rmH^3(G_1,\Z/p\Z)\oplus\rmH^3(G_2,\Z/p\Z) & \cdots  \\
           };

        \path[overlay,->, font=\scriptsize,>=latex]
        (m-1-1) edge node[auto] {} (m-1-2) 
        (m-1-2) edge [out=355,in=175] node[auto] {} (m-2-1)
        (m-2-1) edge node[auto] {} (m-2-2)
        (m-2-2) edge node[auto] {} (m-2-3);
\end{tikzpicture}
\]
(cf. \cite[p.~653]{qsv:quadratic}).
Since $\rmH^2(X,\Z/p\Z)=\rmH^3(G_i,\Z/p\Z)=0$ for both $i=1,2$, one has $\rmH^3(G,\Z/p\Z)=0$, and thus by quadraticity also $\rmH^n(G,\Z/p\Z)=0$ for all $n\geq3$.

\begin{rem}\label{rem:torfree}\rm
It is well-known that if a pro-$p$ group has non-trivial torsion, then its $n$-th $\Z/p$-cohomology group is non trivial for every $n>0$; hence, $G$ is torsion-free.
\end{rem}


\section{Proof of Theorem~\ref{thm:intro1} case (1.1.a)}

Let $G$ be a pro-$p$ group as defined in Theorem~\ref{thm:intro1}, with defining relations as in (1.1.a) --- namely,
\[
  G=\langle\:x,y_1,\ldots,y_{d_1},z_1,\ldots,z_{d_2}\:\mid\:r_1=r_2=1\:\rangle,\]
with $d_1+d_2\geq4$ and 
\[\begin{split}
 r_1&= [x,y_1]\cdots[y_{d_1-1},y_{d_1}],\\r_2&=[x,z_1]\cdots[z_{d_2-1},z_{d_2}].
  \end{split}\]
Without loss of generality, we may assume that $d_1\geq3$.


\subsection{Kummerianity}

Let $G_1,G_2$ be the two Demushkin groups as in \S~\ref{ssec:amalg}, with $\epsilon=0$.
By Example~\ref{exam:demushkin}, if 
$$\theta_1\colon G_1\longrightarrow1+p\Z_p\qquad \text{and} \qquad \theta_2\colon G_2\longrightarrow1+p\Z_p$$
are two torsion-free orientations completing respectively $G_1$ and $G_2$ into Kummerian oriented pro-$p$ groups, then necessarily $\theta_1(x)=\theta_1(y_1)=\ldots=\theta_1(y_{d_1})=1$, and analogously $\theta_2(x)=\theta_2(z_1)=\ldots=\theta_1(z_{d_2})=1$.

\begin{prop}\label{prop:thm2 kummer}
Let $\theta\colon G\to1+p\Z_p$ be a torsion-free orientation.
Then the oriented pro-$p$ group $(G,\theta)$ is Kummerian if, and only if, $\theta$ is constantly equal to 1. 
\end{prop}

\begin{proof}
 If $\theta\equiv\mathbf{1}$, then $(G,\mathbf{1})$ is Kummerian if, and only if, the abelianization $G^{\ab}$ is a free abelian pro-$p$ group.
 But this is easily verified, as clearly $G^{\ab}\simeq\Z_p^{d_1+d_2-1}$.

Conversely, suppose that $(G,\theta)$ is Kummerian.
Let $N_1$ and $N_2$ denote the normal subgroups of $G$ generated as normal subgroups by $z_1,\ldots,z_{d_2}$ and $y_1,\ldots,y_{d_1}$ respectively.
Then $G/N_1\simeq G_1$ and $G/N_2\simeq G_2$.
Moreover, Proposition~\ref{prop:quot} implies that $(G/N_i,\theta_{/N_i})$ is Kummerian for both $i=1,2$.
Since $G/N_i\simeq G_i$ for both $i$, Example~\ref{exam:demushkin} and the argument before the statement of the proposition imply that the torsion-free orientations $\theta_{/N_1}$ and $\theta_{/N_2}$ are constantly equal to $1$.
Hence, also $\theta$ is constantly equal to $1$, as $\theta(w)=\theta_{/N_1}(wN_1)$ for every $w\in G_1$, and analogously 
$\theta(w)=\theta_{/N_2}(wN_2)$ for every $w\in G_2$.
\end{proof}

Therefore, if $G$ may complete into a 1-cyclotomic oriented pro-$p$ group, then necessarily $G$ is absolutely torsion-free.
In order to prove Theorem~\ref{thm:intro1} in case (1.1.a), we aim at exhibiting an open subgroup $H$ of $G$, of index $p^2$, whose abelianization $H^{\ab}$ has non-trivial torsion.


\subsection{The subgroup $U$}

Set $u=y_3^p$, $t_0=z_1^{-1}y_3$, and $t_h=t_0t_0^{y_3}\cdots t_0^{y_3^h}$ for all $h=0,\ldots,p-1$.
A straightforward computation shows that 
\begin{equation}\label{eq:z1 h}
 z_1^{h}=y_3^h\cdot(t_0^{-1})^{y_3^{h-1}}\cdots(t_0^{-1})^{y_3}\cdot t_0^{-1}=y_3^ht_{h-1}^{-1}  
\end{equation}
for all $h=0,\ldots,p-1$.

Let $\phi_G\colon G\to\Z/p$ be the homomorphism of pro-$p$ groups defined by $\phi_G(y_3)=\phi_G(z_1)=1$ and $\phi_G(x)=\phi_G(y_i)=\phi_G(z_j)=0$ for all $i=1,2,4,\ldots,d_1$ and $j=2,\ldots,d_2$, and set $U=\Ker(\phi)$.
Then $U$ is an open subgroup of $G$ of index $p$, generated as a normal subgroup by the subset
\[
 \calX=\left\{\: u,\: x,\: t_0,\: y_i,\:z_j\:\mid \: i=1,2,4,\ldots,d_1,\:j=2,\ldots,d_{2} \right\},
\]
and $G/U=\{U,y_3U,\ldots,y_3^{p-1}U\}$.

\begin{lem}\label{lem:U ab}
 The subset 
\[
\calY_U=\left\{\:u,\:x,\:y_2,\:t_h,\:y_i^{y_3^h},\:z_j^{y_3^h}\:\mid\:i=1,4,\ldots,d_1,\:j=2,\ldots,d_{2},\:h=0,\ldots,p-1\:\right\}      \] of $U$
is a minimal generating set of $U$ as a pro-$p$ group.
\end{lem}

\begin{proof}
 Since $U$ is normally generated by $\calX$ and $G/U=\{U,\ldots,y_3^{p-1}U\}$, $U$ is generated as a pro-$p$ group by the set $\{w^{y_3^h}\mid w\in\calX,h=0,\ldots,p-1 \}$.
 Also, $U$ is subject to the relations
 \begin{eqnarray}\label{eq:rel1 U}
   r_1^{y_3^h}&=&\left[x^{y_3^h},y_1^{y_3^h}\right]\cdots\left[y_{d_1-1}^{y_3^h},y_{d_1}^{y_3^h}\right]=1,\\
   r_2^{y_3^h}&=&\left[x^{y_3^h},z_1^{y_3^h}\right]\cdots\left[z_{d_2-1}^{y_3^h},z_{d_2}^{y_3^h}\right]=1,  \label{eq:rel2 U}
 \end{eqnarray}
with $h=0,\ldots,p-1$.

Consider the abelianization $U^{\ab}$.
Since the only factor in \eqref{eq:rel1 U} which does not lie in $U'$ is $[y_2^{y_3^h},y_3]$, the relation \eqref{eq:rel1 U} implies that $[y_2^{y_3^h},y_3]\in U'$ as well, and therefore
\[
 y_2^{y_3^h}\equiv y_2\mod U'\qquad \text{for all }h=0,\ldots,p-1.
\]
Analogously, the only factor in \eqref{eq:rel2 U} which does not lie in $U'$ is $[x^{y_3^h},z_1^{y_3^h}]$, so that the relation \eqref{eq:rel1 U} implies that $[x^{y_3^h},z_1^{y_3^h}]\in U'$ as well.
Hence, one has
\[\begin{split}
    [x,z_1]\equiv1\bmod U'\; &\Rightarrow \; x^{y_3t_0^{-1}}\equiv x\bmod U'\\
    &\Rightarrow x^{y_3}\equiv x^{t_0}\bmod U',\\
    \left[x^{y_3},z_1^{y_3}\right]\equiv 1\bmod U' \;&\Rightarrow \; 
    (x^{y_3})^{(z_1^{y_3})}=x^{y_3^2(t_0^{-1})^{y_3}}\equiv x^{y_3}\bmod U'\\
    &\Rightarrow\; x^{y_3^2}\equiv x^{t_1}\bmod U',
  \end{split}\]
and so on.
Thus
\[
 x^{y_3^h}\equiv x^{t_{h-1}}\mod U'\qquad \text{for all }h=1,\ldots,p-1.
\]
Altogether, $U^{\ab}$ is the free abelian pro-$p$ group generated by the cosets $\{wU'\mid w\in\calY_U\}$, so that Fact~\ref{fact:gen Gab} yields the claim.
\end{proof}

Now set $U_1=G_1\cap U$ and $U_2=G_2\cap U$.
Then $U_1,U_2$ are open subgroups of $G_1,G_2$ respectively of index $p$, and thus they are again Demushkin groups, on $2+p(d_1-1)$ and $2+p(d_2-1)$ generators respectively (cf. \cite{dummitlabute}).
In particular, the defining relation of $U_1$ is 
\begin{equation}\label{eq:rel U1}
 s_1= \prod_{h=p-1}^0\left(\left[y_{4}^{y_3^h},y_{5}^{y_3^h}\right]\cdots
 \left[y_{d_1-1}^{y_3^h},y_{d_1}^{y_3^h}\right]\left[x^{y_3^h},y_{1}^{y_3^h}\right]\right)[y_2,u]=1,
\end{equation}
while the defining relation of $U_2$ is
\begin{equation}\label{eq:rel U2}
\begin{split}
 s_2&=\prod_{h=p-1}^0\left(\left[z_{2}^{z_1^h},z_{3}^{z_1^h}\right]\cdots
 \left[z_{d_2-1}^{z_1^h},z_{d_2}^{z_1^h}\right]\right)[x,z_1^p]  \\
 &=\prod_{h=p-1}^0\left(\left[z_{2}^{y_3^ht_{h-1}^{-1}},z_{3}^{y_3^ht_{h-1}^{-1}}\right]\cdots
 \left[z_{d_2-1}^{y_3^ht_{h-1}^{-1}},z_{d_2}^{y_3^ht_{h-1}^{-1}}\right]\right)[x,ut_{p-1}^{-1}] =1.
\end{split}
 \end{equation}
Also, from the relations \eqref{eq:rel U1}--\eqref{eq:rel U2} and from \eqref{eq:z1 h}, one computes
\begin{equation}\label{eq:x U}
 \begin{split}
    x^{y_3}&=x^{z_1t_0}=x^{t_0}([z_{d_2},z_{d_2-1}]\cdots[z_3,z_2])^{t_0},\\
  x^{y_3^2} &= x^{t_1}([z_{d_2},z_{d_2-1}]\cdots)^{t_1}
  \left(\left[z_{d_2}^{y_3},z_{d_2-1}^{y_3}\right]\cdots\right)^{t_0^{-1}t_1},\\
  x^{y_3^3} &= x^{t_2}([z_{d_2},z_{d_2-1}]\cdots)^{t_2}
  \left(\left[z_{d_2}^{y_3},z_{d_2-1}^{y_3}\right]\cdots\right)^{t_0^{-1}t_2}
  \left(\left[z_{d_2}^{y_3^2},z_{d_2-1}^{y_3^2}\right]\cdots\right)^{t_1^{-1}t_2},
 \end{split}  \end{equation}
and so on. 
In fact, the two relations \eqref{eq:rel U1}--\eqref{eq:rel U2} --- with the $x^{y_3^h}$'s replaced using \eqref{eq:x U} --- are all the defining relations we need to get $U$, as shown in the following.

\begin{lem}
 The pro-$p$ group $U$ has $\mathrm{r}(U)=2$ defining relations.
\end{lem}

\begin{proof}
 Since $\rmH^n(G,\Z/p\Z)=0$ for every $n\geq3$ (cf. \S~\ref{ssec:quadratic}) and $[G:U]=p$, one has $\rmH^n(U,\Z/p\Z)=0$ for every $n\geq3$ as well (cf. \cite[Prop.~3.3.5]{nsw:cohn}).
 Moreover, one has
 \begin{equation}\label{eq:eulerchar}
 \mathrm{r}(U)-\mathrm{d}(U)+1=p\left(\mathrm{r}(G)-\mathrm{d}(G)+1\right)                      
 \end{equation}
(cf. \cite[Prop.~3.3.13]{nsw:cohn}).
By definition, $\mathrm{r}(G)=2$ and $\mathrm{d}(G)=1+d_1+d_2$, while $\mathrm{d}(U)=3+p(d_1+d_2-2)$ by Lemma~\ref{lem:U ab}.
Therefore, from \eqref{eq:eulerchar} one computes $\mathrm{r}(U)=2$.
\end{proof}


\subsection{The subgroup $H$}
Let $\phi_U\colon U\to\Z/p $ be the homomorphism of pro-$p$ groups defined by $\phi_U(y_1)$, $\phi_U(y_1^{y_3})=-1$, and $\phi_U(w)=0$ for any other element $w$ of $\calY_U$, and put $H=\Ker(\phi_U)$.
Then $H$ is an open subgroup of $U$ of index $p$.
 Set $v=y_1$.
Since $U/H=\{H,vH,\ldots,v^{p-1}H\}$, $H$ is the pro-$p$ group (non-minimally) generated by
\[
 \calX_{H}=\left\{\:v^p,\:\left(vy_1^{y_3}\right)^{v^h},\:w^{v^h}\:\mid w\in \calY_U,\:w\neq v,y_1^{y_3},\:
 h=0,\ldots,p-1\:\:\right\},
\]
and subject to the $2p$ relations $s_1^{v^h}=1$ and $s_2^{v^h}=1$, with $h=0,\ldots,p-1$.
We claim that the abelianization $H^{\ab}$ yields non-trivial torsion.

\begin{prop}
 The abelian pro-$p$ group $H^{\ab}$ is not torsion-free.
\end{prop}

\begin{proof}
 Since all the elements of $\calY_U$ showing up in the last terms of the equalities \eqref{eq:x U} belong to $H$, one deduces that $x^{y_3^h}\equiv x\bmod H'$ for all $h=0,\ldots,p-1$.
 
 Now, each factor of $s_2$ --- cf. \eqref{eq:rel U2} --- is a commutator of elements of $H$, and thus the relations $s_2^{v^h}=1$ yield trivial relations in $H^{\ab}$.
 On the other hand, every factor of $s_1$ --- cf. \eqref{eq:rel U1} ---, but $[x,y_1]$ and $[x^{y_3},y_1^{y_3}]$, is a commutator of elements of $H$.
 From \eqref{eq:rel U1} one obtains
 \begin{equation}\label{eq:rel xv xv-1}
 \left[x^{y_3},y_1^{y_3}\right] [x,y_1]\equiv\left[x,v^{-1}(vy_1^{y_3})\right][x,v]\equiv [x,v^{-1}][x,v]\equiv 1\mod H',
 \end{equation}
as $vy_1^{y_3}\in H$.
Altogether, $H^{\ab}$ is the abelian pro-$p$ group (non-minimally) generated by the set $\calX_{H^{\ab}}=\{wH'\:\mid\:w\in\calX_H\}$, and subject to the $p$ relations
\[
 \left[x^{v^h}H',v^{-1}H'\right]\left[x^{v^h}H',vH'\right]=H',\qquad \text{with }h=0,\ldots,p-1,
\]
as $U/H=\{H,vH,\ldots,v^{p-1}H\}$.
From these relations one deduces the equivalences:
\[ \begin{split}
 x^{v^2} &\equiv \left(x^v\right)^2\cdot x^{-1}\mod H'\qquad\text{with }h=1,\\
 x^{v^3} &\equiv \left(x^{v^2}\right)^2\cdot \left(x^v\right)^{-1}\equiv\left(x^v\right)^3\cdot x^{-2} \mod H'\qquad\text{with }h=2,\\
  &\;\vdots\\
 x^{v^{p-1}} &\equiv \left(x^{v^{p-2}}\right)^2\cdot \left(x^{v^{p-3}}\right)^{-1}\equiv\left(x^v\right)^{p-1}\cdot x^{2-p} \mod H'\qquad\text{with }h=p-2,\\
 x^{v^{p}} &\equiv \left(x^{v^{p-1}}\right)^2\cdot \left(x^{v^{p-2}}\right)^{-1}\equiv\left(x^v\right)^{p}\cdot x^{1-p} \mod H'\qquad\text{with }h=p-1.\\
 \end{split} \]
But $x^{v^p}\equiv x\bmod H'$, as $v^p\in H$, and thus from the last of the above equivalences one obtains
\begin{equation}\label{eq:equiv xv 1}
 x\equiv(x^v)^px^{1-p}\mod H'\;\Longrightarrow\;(x^v)^px^{-p}\equiv (x^vx^{-1})^p\equiv1\mod H'.
\end{equation}
Altogether, $H^{\ab}$ is the abelian pro-$p$ group minimally generated by
\[
 \calY_{H^{\ab}}=\left\{\:v^pH',\: xH', \: x^vH',\:\left(vy_1^{y_3}\right)^{v^h}H',\:w^{v^h}H'\:\mid 
 \: h=0,\ldots,p-1\:\:\right\},
\]
where $w\in\calY_U\smallsetminus\{v,y_1^{y_3},x\}$, and subject to the relation $((xH')^{-1}\cdot x^vH')^p=H'$ --- in particular, $H^{\ab}$ is isomorphic to $\Z_p^{2+p+p^2(d_1+d_2-2)}\times \Z/p\Z$.
\end{proof}


\section{Proof of Theorem~\ref{thm:intro1} case (1.1.b)}\label{sec:thm1}
Let $p$ be an odd prime, and let $G$ be a pro-$p$ group as defined in Theorem~\ref{thm:intro1}, with defining relations as in (1.1.b) --- namely,
\[
  G=\langle\:x,y_1,\ldots,y_{d_1},z_1,\ldots,z_{d_2}\:\mid\:r_1=r_2=1\:\rangle,\]
with
\[\begin{split}
 r_1&= y_1^p[y_1,x]\cdots[y_{d_1-1},y_{d_1}],\\r_2&=z_1^p[z_1,x]\cdots[z_{d_2-1},z_{d_2}].
  \end{split}\]
  

\subsection{Kummerianity}\label{ssec:kummer1}
Let $G_1,G_2$ be the two Demushkin groups as in \S~\ref{ssec:amalg}, with $\epsilon=1$.
By Example~\ref{exam:demushkin}, if 
$$\theta_1\colon G_1\longrightarrow1+p\Z_p\qquad \text{and} \qquad \theta_2\colon G_2\longrightarrow1+p\Z_p$$
are two torsion-free orientations completing respectively $G_1$ and $G_2$ into Kummerian oriented pro-$p$ groups, then necessarily $\theta_1(y_1)=\ldots=\theta_1(y_{d_1})=1$, and analogously $\theta_2(z_1)=\ldots=\theta_1(z_{d_2})=1$,
while $\theta_1(x)=\theta_2(x)=(1-p)^{-1}$.

\begin{prop}\label{lemma:theta}
 An orientation $\theta\colon G\to1+p\Z_p$ completes $G$ into a Kummerian oriented pro-$p$ group $(G,\theta)$ if, and only if, 
 $$\theta(x)=(1-p)^{-1}\qquad\text{and}\qquad \theta(y_i)=\theta(z_j)=1$$ for all $i=1,\ldots,d_1$ and $j=1,\ldots,d_2$.
\end{prop}

\begin{proof}
Suppose that $\theta\colon G\to1+p\Z_p$ is the orientation defined as above, and pick arbitrary $p$-adic integers $\lambda,\lambda_i,\lambda'_j\in\Z_p$ for $1\leq i\leq d_1$ and $1\leq j\leq d_2$.
The assignment $x\mapsto \lambda$, $y_i\mapsto\lambda_i$ and $z_j\mapsto \lambda'_j$ for every $i,j$ yields a well-defined continuous 1-cocycle $c\colon G\to\Z_p(\theta)$, as \eqref{eq:1cocycle comm} imples that
\[\begin{split}
   c(r_1)&= c(y_1^p)+c([y_1,x])+c([y_2,y_3])+\ldots+c([y_{d_1-1},y_{d_1}])\\
   &= p\cdot \lambda_1+ \theta(x)^{-1}(\lambda_1(1-\theta(x))-0)+0+\ldots+0 \\&=0
  \end{split}\]
and 
\[\begin{split}
   c(r_2)&= c(z_1^p)+c([z_1,x])+c([z_2,z_3])+\ldots+c([z_{d_2-1},z_{d_2}])\\
   &= p\cdot \lambda'_1+ \theta(x)^{-1}(\lambda'_1(1-\theta(x))-0)+0+\ldots+0 \\&=0
  \end{split}\]
Therefore, $(G,\theta)$ is  Kummerian by Lemma~\ref{lem:labute inf}.

Conversely, suppose that $(G,\theta)$ is Kummerian.
Let $N_1$ and $N_2$ denote the normal subgroups of $G$ generated as normal subgroups by $z_1,\ldots,z_{d_2}$ and $y_1,\ldots,y_{d_1}$ respectively.
Then $G/N_1\simeq G_1$ and $G/N_2\simeq G_2$.
Moreover, Proposition~\ref{prop:quot} implies that $(G/N_i,\theta_{/N_i})$ is Kummerian for both $i=1,2$.

Since $G/N_i\simeq G_i$ for both $i$, Example~\ref{exam:demushkin} and the argument before the statement of the proposition imply that $\theta_{/N_1}(y_1N_1)=\ldots=\theta_{/N_1}(y_{d_1}N_1)=1$, and analogously 
$\theta_{/N_2}(z_1N_2)=\ldots=\theta_{/N_2}(z_{d_2}N_2)=1$, while $\theta_{/N_1}(xN_1)=\theta_{/N_2}(xN_2)=(1-p)^{-1}$.
Hence, $\theta$ is as defined above, as $\theta(w)=\theta_{/N_1}(wN_1)$ for every $w\in G_1$, and analogously 
$\theta(w)=\theta_{/N_2}(wN_2)$ for every $w\in G_2$.
\end{proof}

Henceforth, $\theta\colon G\to1+p\Z_p$ will denote the orientation as in Proposition~\ref{lemma:theta}.

\subsection{The subgroup $H$}

Let $\phi_1\colon G_1\to\Z/p\oplus\Z/p$ and $\phi_2\colon G_2\to\Z/p\oplus\Z/p$ be the homomorphisms of pro-$p$ groups defined by 
\begin{equation}\label{eq:def hom phi}
 \begin{split}
  & \phi_1(x)=\phi_2(x)=(1,0),\\& \phi_1(y_1)=\phi_2(z_1)=(0,1),\\& \phi_1(y_i)=\phi_2(z_j)=(0,0)\text{ for }i,j\geq2.
  \end{split}
\end{equation}
Put $U_1=\Ker(\phi_1)$ and $U_2=\Ker(\phi_2)$, and also
$$t=z_1^{-1}y_1,\qquad u=x^p,\qquad v=y_1^p,\qquad  w=z_1^p.$$
Then $U_1$ is an open normal subgroup of $G_1$ of index $p^2$, and likewise for $U_2$ and $G_2$ --- note that by \cite{dummitlabute} both $U_1$ and $U_2$ are Demushkin groups.

Finally, put $N_1=\Ker(\theta\vert_{U_1})$, $N_2=\Ker(\theta\vert_{U_2})$, and let $T$ be the subgroup of $G$ generated by $t$.
Observe that $N_1$ and $N_2$ are free pro-$p$ groups, as they are subgroups of infinite index of Demushkin groups (cf. \cite[Ch.~I, \S~4.5, Ex.~5--(b)]{serre:galc}), while $T\simeq \Z_p$ as $G$ is torsion-free (cf. Remark~\ref{rem:torfree}).

Let $H$ be the subgroup of $G$ generated by $U_1$, $U_2$ and $T$, and let $M$ be the subgroup of $H$ generated by $N_1$, $N_2$ and $T$.
Observe that $M\subseteq\Ker(\theta)$.
Our aim is to show that the oriented pro-$p$ group $(H,\theta\vert_H)$ is not Kummerian.
For this purpose, we need the following.

\begin{lem}\label{lem:H noGal}
 \begin{itemize}
  \item[(i)] $M=N_1\amalg N_2\amalg T$.
  \item[(ii)] $M$ is a normal subgroup of $H$, and $H\simeq M\rtimes X^p $
  \item[(iii)] One has an isomorphism of $p$-elementary abelian groups
  \begin{equation}
\frac{G}{\Phi(G)}\simeq \frac{X^p}{X^{p^2}}\times\frac{N_1}{N_1^p[N_1,U_1]}\times \frac{N_2}{N_2^p[N_2,U_2]}\times \frac{T}{T^p}.
  \end{equation}
 \end{itemize}
\end{lem}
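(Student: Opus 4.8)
\textit{Setup and preliminary reductions.} The plan is to deduce all three statements from the action on the standard pro-$p$ tree $\mathcal{T}$ of the amalgamated free pro-$p$ product $G=G_1\amalg_Z G_2$, which is proper since $Z\isom\Z_p$ (cf.\ \cite[Thm.~3.2]{ribes:amalg}): thus $G_1,G_2$ stabilize the endpoints $v_1,v_2$ of a fundamental edge $e$ whose stabilizer is $Z$ (cf.\ \cite[\S~9.2]{ribzal:book}). Two observations do most of the work. First, since $\theta(x)=1-p$ generates $1+p\Z_p$, the restriction of $\theta$ to $Z$ --- hence to every conjugate of $Z$ --- is injective; as $M\subseteq\Ker(\theta)$ this forces $M\cap gZg\inv=\{1\}$ for all $g\in G$, i.e.\ $M$ acts freely on the edges of $\mathcal{T}$. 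Second, since $M\subseteq\Ker(\phi)\cap\Ker(\theta)$, for $i=1,2$ one has
\[
 M\cap G_i\;\subseteq\;\bigl(\Ker(\phi)\cap G_i\bigr)\cap\bigl(\Ker(\theta)\cap G_i\bigr)\;=\;U_i\cap\Ker(\theta\vert_{G_i})\;=\;N_i,
\]
and the reverse inclusion being obvious, $M\cap G_1=N_1$, $M\cap G_2=N_2$ (similarly $H\cap G_i=U_i$). Finally $t=z_0\inv y_0$ is a cyclically reduced word of length $2$ in $G_1\amalg_Z G_2$, hence not conjugate into a factor, hence hyperbolic on $\mathcal{T}$ (translation length $2$, axis through $v_1,e,v_2$); inspecting $\phi$ on reduced forms shows $t\notin\langle U_1,U_2\rangle\supseteq\langle N_1,N_2\rangle$.

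\textit{Part (i).} With $N_1,N_2$ elliptic fixing $v_1,v_2$ and $t$ hyperbolic, I would describe the minimal $M$-invariant subtree $\mathcal{D}\subseteq\mathcal{T}$ and show that $\mathcal{D}/M$ is a bigon: two vertices carrying the only nontrivial stabilizers $M\cap G_1=N_1$ and $M\cap G_2=N_2$, joined by two edges with trivial stabilizers, one of them $e$ and the other supplied by a translate of the axis of $t$. Since $M$ acts on $\mathcal{T}$ with trivial edge stabilizers, the pro-$p$ analogue of the Bass--Serre structure theorem (cf.\ \cite[\S~9.2]{ribzal:book}) then identifies $M$ with the free pro-$p$ product of the vertex groups and of the free pro-$p$ group on the loops of $\mathcal{D}/M$, which here reads $M\isom N_1\amalg N_2\amalg\langle t\rangle=N_1\amalg N_2\amalg T$. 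The main obstacle will be the bookkeeping needed to pin $\mathcal{D}/M$ down exactly: that no vertex orbit other than those of $v_1,v_2$ carries a nontrivial stabilizer, and that the cycle rank equals one.

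\textit{Part (ii).} Since $N_i\triangleleft U_i$ and $U_i/N_i$ is procyclic, generated by the image of $u=x^p$ (because $\theta(u)=(1-p)^p$ generates $\Img(\theta\vert_{U_i})=1+p^2\Z_p$), one has $U_i=N_i\langle u\rangle$, whence $H=\langle U_1,U_2,t\rangle=\langle M,u\rangle$. It then remains to see that $u$ normalizes $M$: it normalizes $N_1,N_2$ as $N_i\triangleleft U_i\ni u$, and using the Demushkin relations $r_1,r_2$ one writes $x^py_0x^{-p}=y_0n$ and $x^pz_0x^{-p}=z_0n'$ with $n=[y_0,x^{-p}]\in N_1$, $n'=[z_0,x^{-p}]\in N_2$, so that
\[
 utu\inv=\bigl(x^pz_0\inv x^{-p}\bigr)\bigl(x^py_0x^{-p}\bigr)=(n')\inv z_0\inv y_0\,n=(n')\inv t\,n\in M.
\]
Hence $M\triangleleft H$ and $H=M\langle u\rangle$; as $M\cap\langle u\rangle\subseteq\Ker(\theta)\cap\langle u\rangle=\{1\}$, this gives $H\isom M\rtimes\langle u\rangle=M\rtimes Z^p$.

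\textit{Part (iii).} From (ii) there is a short exact sequence $1\to M\to H\to Z^p\to1$ with $Z^p\isom\Z_p$; since $\cd(Z^p)=1$, the Lyndon--Hochschild--Serre sequence in $\Z/p$-cohomology degenerates to $H^1(H,\Z/p)\isom H^1(Z^p,\Z/p)\oplus H^1(M,\Z/p)^{Z^p}$, i.e.\ dually $H/\Phi(H)\isom Z^p/Z^{p^2}\oplus\bigl(M/M^p[M,H]\bigr)$. By (i), $M^{\ab}\otimes\Z/p$ splits as $(N_1^{\ab}\otimes\Z/p)\oplus(N_2^{\ab}\otimes\Z/p)\oplus(T^{\ab}\otimes\Z/p)$, with $u$ acting on the first two summands through $U_i/N_i$ and $(u-1)\bar t=\bar n-\bar n'$ by (ii); so $M/M^p[M,H]$ is the quotient of $N_1/N_1^p[N_1,U_1]\oplus N_2/N_2^p[N_2,U_2]\oplus T/T^p$ by the class of $(\bar n,-\bar n')$. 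The delicate point --- expected to be the second main obstacle --- is to verify that $n\in N_1^p[N_1,U_1]$ and $n'\in N_2^p[N_2,U_2]$: the relation $r_1$ yields $y_0^p\equiv[x^{-1},y_0]\pmod{[N_1,N_1]}$ (the commutators $[y_i,y_{i+1}]$ with $i\ge1$ lie in $[N_1,N_1]$, as $y_i\in N_1$ for $i\ge1$), and feeding this into $[y_0,x^{-p}]=\prod_{k=0}^{p-1}x^k[y_0,x^{-1}]x^{-k}$ together with the conjugation action of $x$ on $N_1$ shows the class of $n$ in $N_1^{\ab}/(p,(u-1))$ vanishes, and symmetrically for $n'$. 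This gives $M/M^p[M,H]\isom N_1/N_1^p[N_1,U_1]\oplus N_2/N_2^p[N_2,U_2]\oplus T/T^p$ and hence the asserted isomorphism.
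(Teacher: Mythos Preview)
Your plan is sound and tracks the paper's argument closely; the only substantive divergence is in part~(i). You propose to compute the quotient $\mathcal{D}/M$ as a bigon and then read off the decomposition from pro-$p$ Bass--Serre theory, whereas the paper sidesteps this bookkeeping entirely: it applies Mel'nikov's structure theorem \cite[Thm.~5.6]{melnikov} to obtain $M=\bigl(\coprod_\omega\mathrm{Stab}_M(\omega)\bigr)\amalg F$ directly from the trivial edge-stabilizers, identifies two of the vertex stabilizers as $M\cap G_i=N_i$ (essentially your second observation), and then notes that since $t\notin N_1\amalg N_2$ while $M=\langle N_1,N_2,t\rangle$, the remaining free factor is forced to be exactly $T$ --- no quotient-graph analysis needed. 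For (ii)--(iii) the paper is likewise more economical: rather than writing $n=[y_0,u^{-1}]\in N_1$ and then separately verifying $n\in N_1^p[N_1,U_1]$ via your product expansion, it iterates the relation $r_1$ to obtain the sharper $uy_0u^{-1}=y_0^{(1-p)^p}n_1$ with $n_1\in N_1'$, so that $n=v^{p\lambda}n_1\in N_1^pN_1'\subseteq\Phi(N_1)$ is immediate (here $(1-p)^p=1+p^2\lambda$), giving $[T,Z^p]\subseteq\Phi(M)$ and hence (iii) at once. Your route does work --- since $x$ acts on $\bar v$ in $N_1^{\ab}$ by multiplication by $1-p$, one computes $\sum_{k=0}^{p-1}x^k\bar v=-p\lambda\bar v\in pN_1^{\ab}$ --- but the paper's is shorter. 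One caveat: your justification ``inspecting $\phi$ on reduced forms shows $t\notin\langle U_1,U_2\rangle$'' is not quite right as stated, since $\phi(t)=0$; the fact actually needed, $t\notin N_1\amalg N_2$, follows from normal-form uniqueness in the amalgam together with $z_0\notin U_2$.
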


\begin{proof}
Consider the pro-$p$ tree $\mathcal{T}$ associated to the amalgamated free pro-$p$ product \eqref{eq:amalg}.  
Namely, $\mathcal{T}$ consists of a set vertices $\mathcal{V}$ and a set of edges $\mathcal{E}$, where
\[\begin{split} 
    \mathcal{V}&=\{\:hG_1,hG_2\:\mid\:h\in G\:\}=G/G_1\:\dot\cup\: G/G_2,\\
    \mathcal{E}&=\{\:hX\:\mid\:h\in G\:\}=G/X,
  \end{split}\]
and it comes endowed with a natural $G$-action, i.e.,
\begin{equation}\label{eq:tree action} 
\begin{split}
  g.(hG_1)=(gh)G_1\qquad &\text{for every } g\in G,\: hG_1\in G/G_1\subseteq \mathcal{V}\\
  g.(hG_1)=(gh)G_2 \qquad&\text{for every }g\in G,\:hG_2\in G/G_2\subseteq \mathcal{V},\\
  g.(hX)=(gh)X\qquad&\text{for every }g\in G,\:hX\in G/X=\mathcal{E}.
\end{split}\end{equation}

Pick $g\in M$ and $hX\in\mathcal{E}$. 
Then $g.hX=hX$ if, and only if, $g\in hXh^{-1}$, i.e., $g=hx^\lambda h^{-1}$ for some $\lambda\in\Z_p$.
Since $M\subseteq\Ker(\theta)$, it follows that
\begin{equation}
1=\theta(g)=\theta\left(hx^\lambda h^{-1}\right)=\theta(x)^\lambda=(1-p)^{\lambda}, 
\end{equation}
and therefore $\lambda=0$, as $1+p\Z_p$ is torsion-free.
Hence, the subgroup $M$ intersects trivially the stabilizer $\mathrm{Stab}_{G}(hX)$ of every edge $hX\in\mathcal{E}$.
By \cite[Thm.~5.6]{melnikov}, $M$ decomposes as free pro-$p$ product as follows:
\begin{equation}\label{eq:melnikov}
M=\left(\coprod_{\omega\in \mathcal{V}'}\mathrm{Stab}_{M}(\omega)\right)\amalg F,
\end{equation}
where $F$ is a free pro-$p$ group, and $\mathcal{V}'\subseteq\mathcal{V}$ is a continuous set of representatives of the space of orbits $M\backslash\mathcal{V}$.
Clearly, the vertices $G_1$ and $G_2$ belong to different orbits, thus in the decomposition \eqref{eq:melnikov} one finds the two factors
\[ \begin{split}
  \mathrm{Stab}_M(G_1)=\{\:g\in M\:\mid\:gG_1=G_1\:\}=M\cap G_1, \\
  \mathrm{Stab}_M(G_2)=\{\:g\in M\:\mid\:gG_2=G_2\:\}=M\cap G_2.
 \end{split}\]
Since $N_1\subseteq M\cap G_1\subseteq \Ker(\theta)\cap G_1=N_1$, one has $\mathrm{Stab}_M(G_1)=N_1$, and analogously $\mathrm{Stab}_M(G_2)=N_2$.
Therefore, from \eqref{eq:melnikov} one obtains 
\begin{equation}\label{eq:melnikov2}
 M=N_1\amalg N_2\amalg \left(\coprod_{\omega\in\mathcal{V}'\smallsetminus\{G_1,G_2\}}\mathrm{Stab}_{M}(\omega)\amalg F\right).
\end{equation}
It is straightforward to see that $t\notin N_1\amalg N_2$. 
Since $M$ is generated as pro-$p$ group by $N_1$, $N_2$ and $t$, the right-side factor in \eqref{eq:melnikov2} is necessarily $T$, and this proves (i).

In order to prove (ii), we need only to show that $uMu^{-1}=M$, as $H=\langle\:u,M\:\rangle$.
Since $N_1$ is normal in $U_1$, and $u\in U_1$, then $uN_1u^{-1}=N_1$ --- analogously, $uN_2u^{-1}=N_2$.
Now, observe that the integer
\[
 (1-p)^p-1=\left(1-\binom{p}{1}p+\binom{p}{2}p^2-\ldots-p^p\right)-1
\]
is divisible by $p^2$ but not by $p^3$, so we put $(1-p)^p=1+p^2\lambda$, with $\lambda\in1+p\Z_p$.
From the relation $r_1=1$ one deduces
\begin{equation}\label{eq:action x y}
 y_1^{x}=y_1^{1-p}\cdot\left([y_2,y_3]\cdots[y_{d_1-1},y_{d_1}]\right)^{-1},
\end{equation}
and by iterating \eqref{eq:action x y} $p$ times, one obtains $ y_1^{u}=y_1^{(1-p)^p} n_1$
for some $n_1\in N_1'$ --- for this purpose, observe that for every $\nu\geq0$ and $i\geq1$, the triple commutator 
$$\left[y_1^\nu,[y_i,y_{i+1}]\right]=\left[y_i^{y_1^\nu},y_{i+1}^{y_1^\nu}\right]^{-1}\cdot [y_i,y_{i+1}]$$
belongs to $N_1'$, as $y_i^{y_0^\nu}\in N_1$.
Analogously, $ z_1^{u}=z_1^{(1-p)^p}n_2$ for some $n_2\in N_2'$.
Altogether, 
\begin{equation}\label{eq:action u t}
 t^{u}=(z_1^{-1}y_1)^{u}=z_1^{u}y_1^{u}=
 n_2^{-1}\cdot w^{-p\lambda}\cdot t\cdot v^{p\lambda}\cdot n_1,
\end{equation}
which belongs to $M$ --- 
here we replaced $z_1^{-(1-p)^p}=w^{-p\lambda}\cdot z_1^{-1}$ and $y_1^{(1-p)^p}=y_1\cdot v^{p^\lambda}$. 
Hence, $M\unlhd H$.
Finally, by definition $H=M\cdot X^p$, and moreover $$M\cap X^p\subseteq\Ker(\theta)\cap X^p=\{1\},$$ so that $H=M\rtimes X^p$.
This completes the proof of (ii).

Finally, by (i) and (ii) one has the isomorphism of $p$-elementary abelian groups
\begin{equation}\label{eq:isom H Phi}
\begin{split}
 M/\Phi(M) &\simeq N_1/\Phi(N_1)\times N_2/\Phi(N_2)\times T/T^p \\
 H/\Phi(H) &\simeq X^p/X^{p^2}\times M/M^p[M,H].
\end{split}
\end{equation}
From \eqref{eq:action u t} one has that $[T,X^p]\subseteq \Phi(M)$, and since $H=MX^p$, $U_1=N_1X^p$, and $U_2=N_2X^p$, form \eqref{eq:isom H Phi} one deduces (iii).
\end{proof}


\subsection{The subgroup $H$ and Kummerianity}

\begin{prop}
 The oriented pro-$p$ group $(H,\theta\vert_H)$ is not Kummerian.
\end{prop}

\begin{proof}
Let $N$ be the normal subgroup of $H$ generated as a normal subgroup by $N_1,N_2$, and set $\bar H=H/N$.
Then $N\subseteq\Ker(\theta\vert_H)$, and clearly $\bar H$ is finitely generated.
Moreover, by duality the restriction map 
$\mathrm{res}_{H,N}^1\colon H^1(H,\Z/p\Z)\to H^1(N,\Z/p\Z)^H$ is surjective, as by Lemma~\ref{lem:H noGal} one has 
$$N/N^p[N,H]\simeq N_1/N_1^p[N_1,U_1]\times N_2/N_2^p[N_2,U_2],$$
which embeds in $H/\Phi(H)$.
In particular, $\{uN,tN\}$ is a minimal generating set of $\bar H$.
Thus, by Proposition~\ref{prop:quot} if the oriented pro-$p$ group $(\bar H,\bar\theta)$ is not  Kummerian --- where $\bar\theta=(\theta\vert_H)_{/N}\colon \bar H\to1+p\Z_p$ is the orientation induced by $\theta\vert_H$ ---, then also $(H,\theta\vert_H)$ is not  Kummerian.

By \eqref{eq:action u t}, in $H$ one has that $[t,u^{-1}]\equiv 1\bmod N$, and thus $\bar H$ is abelian.
Moreover, 
$$\bar\theta(uN)=\theta(u)=(1-p)^p \qquad\text{and}\qquad \bar\theta(tN)=\theta(t)=1,$$
so that $\Ker(\bar\theta)=\langle tN\rangle$.
Therefore, the subgroup $K_{\bar\theta}(\bar H)$ is generated by
$$\left(t^{-\theta(u)}utu^{-1}\right)N=t^{p^2\lambda}N.$$
Thus, the quotient $\Ker(\bar\theta)/K_{\bar\theta}(\bar H)=\langle tN\rangle/\langle tN\rangle^{p^2}$ is not torsion-free, and by Proposition~\ref{prop:kummer}, $(\bar H,\bar\theta)$ is not  Kummerian.
\end{proof}

This completes the proof of Theorem~\ref{thm:intro1} case~(1.1.b).

\begin{rem}\rm
 If $d_1=d_2=1$, case~(1.1.b) of Theorem~\ref{thm:intro1} is a particular case of \cite[Prop.~6.5]{BQW}.
\end{rem}



\section{Massey products}

\subsection{Massey products in Galois cohomology}
\label{ssec:massey}

Here we recall briefly what we need in order to prove Proposition~\ref{prop:properties intro}.
For a detailed account on Massey products for pro-$p$ groups, we direct the reader to \cite{vogel,ido:massey,mt:massey}.

Let $G$ be a pro-$p$ group.
For $n\geq2$, the {\sl $n$-fold Massey product} on $\rmH^1(G,\Z/p\Z)$ is a multi-valued map
\[
 \underbrace{\rmH^1(G,\Z/p\Z)\times \ldots\times \rmH^1(G,\Z/p\Z)}_{n\text{ times}}\longrightarrow \rmH^2(G,\Z/p\Z).
\]
For $n\geq2$, given a sequence $\alpha_1,\ldots,\alpha_n$ of elements of $\rmH^1(G,\Z/p\Z)$ (with possibly $\alpha_i=\alpha_j$ for some $1\leq i<j\leq n$), the (possibly empty) subset of $\rmH^2(G,\Z/p\Z)$ which is the value of the $n$-fold Massey product associated to the sequence $\alpha_1,\ldots,\alpha_n$ is denoted by $\langle\alpha_1,\ldots,\alpha_n\rangle$.
If $n=2$, then the 2-fold Massey product coincides with the cup-product, i.e., for $\alpha_1,\alpha_2\in \rmH^1(G,\Z/p\Z)$ one has 
\begin{equation}\label{eq:cup 2massey}
 \langle\alpha_1,\alpha_2\rangle=\{\alpha\smallsmile\alpha_2\}\subseteq \rmH^2(G,\Z/p\Z).
\end{equation}

A pro-$p$ group $G$ is said to satisfy:
\begin{itemize}
 \item[(a)] the {\sl $n$-Massey vanishing property} (with respect to $\Z/p\Z$) if for every 
sequence $\alpha_1,\ldots,\alpha_n$ of elements of $\rmH^1(G,\Z/p\Z)$, $\langle\alpha_1,\ldots,\alpha_n\rangle\neq\varnothing$ implies $0\in \langle\alpha_1,\ldots,\alpha_n\rangle$;
\item[(b)] the {\sl strong $n$-Massey vanishing property} (with respect to $\Z/p\Z$) if for every sequence $\alpha_1,\ldots,\alpha_n$ of elements of $\rmH^1(G,\Z/p\Z)$, the condition on the cup-products
\begin{equation}\label{eq:cup trivial}
 \alpha_1\smallsmile\alpha_2=\alpha_2\smallsmile\alpha_3=\ldots=\alpha_{n-1}\smallsmile\alpha_n=0
\end{equation}
implies $0\in \langle\alpha_1,\ldots,\alpha_n\rangle$ (cf. \cite[Def.~1.2]{pal:massey})
--- we remind that the triviality condition \eqref{eq:cup trivial} is satisfied whenever $\langle\alpha_1,\ldots,\alpha_n\rangle\neq\varnothing$, cf., e.g., \cite[\S~2]{mt:massey};
\item[(c)] the {\sl cyclic $p$-Massey vanishing property} if for every element $\alpha\in\rmH^1(G,\Z/p\Z)$, the $p$-fold Massey product $\langle\alpha,\ldots,\alpha\rangle$ contains 0.
\end{itemize}

\begin{rem}\label{rem:massey cup}\rm
Given a sequence $\alpha_1,\ldots,\alpha_n$ of elements of $\rmH^1(G,\Z/p\Z)$, if an element $\omega$ of $\rmH^2(G,\Z/p\Z)$ is a value of the $n$-fiold Massey product $\langle\alpha_1,\ldots,\alpha_n\rangle$, then 
$$\omega+\alpha_1\smallsmile\beta\in \langle\alpha_1,\ldots,\alpha_n\rangle
\qquad\text{and}\qquad\omega+\alpha_n\smallsmile\beta\in \langle\alpha_1,\ldots,\alpha_n\rangle$$
 for any $\beta\in\rmH^1(G,\Z/p\Z)$ (cf. \cite[Rem.~2.2]{mt:massey}).
\end{rem}

In \cite[Thm.~8.1]{MT:ker}, J.~Mina\v c and N.D.~T\^an proved that the maximal pro-$p$ Galois group of a field $\K$ containing a root of 1 of order $p$ (and also $\sqrt{-1}$ if $p=2$) satisfies the cyclic $p$-Massey vanishing property.
The proof of the last property for a pro-$p$ group $G$ as in Theorem~\ref{thm:intro1} is rather immediate.

\begin{proof}[Proof of Proposition~\ref{prop:properties intro}--(ii)]
 By Proposition~\ref{prop:thm2 kummer} and Proposition~\ref{lemma:theta}, $G$ may complete into a Kummerian oriented pro-$p$ group with torsion-free orientation.
 Hence, $G$ satisfies the cyclic $p$-Massey vanishing property by \cite[Thm.~3.10]{cq:massey}.
\end{proof}


\subsection{Massey products and unipotent upper-triangular matrices}\label{ssec:massey unip}

Massey products for a pro-$p$ group $G$ may be translated in terms of unipotent upper-triangular representations of $G$ as follows.
For $n\geq 2$ let
\[
 \dbU_{n+1}=\left\{\left(\begin{array}{ccccc} 1 & a_{1,2} & \cdots & & a_{1,n+1} \\ & 1 & a_{2,3} &  \cdots & \\
 &&\ddots &\ddots& \vdots \\ &&&1& a_{n,n+1} \\ &&&&1 \end{array}\right)\mid a_{i,j}\in\Z/p \right\}\subseteq 
 \mathrm{GL}_{n+1}(\Z/p\Z)
\]
be the group of unipotent upper-triangular $(n+1)\times(n+1)$-matrices over $\Z/p$.
Then $\dbU_{n+1}$ is a finite $p$-group.
Moreover, for $1\leq h,l\leq n+1$ let $E_{h,l}$ denote the $(n+1)\times(n+1)$ matrix with the $(h,l)$-entry equal to 1, and all the other entries equal to $0$.

Now let $\rho\colon G\to\dbU_{n+1}$ be a homomorphism of pro-$p$ groups.
Observe that for every $h=1,\ldots,n$, the projection $\rho_{h,h+1}\colon G\to\Z/p$ of $\rho$ onto the $(h,h+1)$-entry is a homomorphism, and thus it may be considered as an element of $\rmH^1(G,\Z/p\Z)$.
One has the following ``pro-$p$ translation'' of a result of W.~Dwyer which interprets Massey product in terms of unipotent upper-triangular representations (cf., e.g., \cite[Lemma~9.3]{eq:kummer}).

\begin{prop}\label{prop:representations}
 Let $G$ be a pro-$p$ group, and let $\alpha_1,\ldots,\alpha_n$ be a sequence of elements of $\rmH^1(G,\Z/p\Z)$, with $n\geq2$.
 Then the $n$-fold Massey product $\langle\alpha_1,\ldots,\alpha_n\rangle$:
 \begin{itemize}
  \item[(i)] is not empty if, and only if, there exists a morphism of pro-$p$ groups $\bar\rho\colon G\to\dbU_{n+1}/\mathrm{Z}(\dbU_{n+1})$ such that $\bar\rho_{h,h+1}=\alpha_h$ for every $h=1,\ldots,n$;
  \item[(ii)] vanishes if, and only if, there exists a morphism of pro-$p$ groups $\rho\colon G\to\dbU_{n+1}$ such that $\rho_{h,h+1}=\alpha_h$ for every $h=1,\ldots,n$.
 \end{itemize}
\end{prop}

We recall that
\[
 \mathrm{Z}(\dbU_{n+1})=\left\{\:I_{n+1}+aE_{1,n+1}\:\mid\:a\in\Z/p\Z\:\right\}\simeq\Z/p\Z.
\]
We use this fact to prove statements (iii.a)--(iii.b) of Proposition~\ref{prop:properties intro}.
First of all, let $G$ be as in Theorem~\ref{thm:intro1}, and let $\alpha_1,\ldots,\alpha_n$ be a sequence of elements of $\rmH^1(G,\Z/p\Z)$.
Keeping the same notation as in \S~\ref{ssec:quadratic}, for $h=1,\ldots,n$ one has 
\[
\alpha_h=\alpha_h(x)\cdot\chi+\sum_{i=1}^{d_1}\alpha_h(y_i)\cdot\varphi_i+\sum_{j=1}^{d_2}\alpha_h(z_j)\cdot\psi_j. \]
Therefore, for $h=1,\ldots,n-1$ one obtains 
$$\alpha_h\smallsmile\alpha_h=S_h\cdot(\chi\smallsmile\varphi_1)+S'_h\cdot(\chi\smallsmile\psi_1),$$ 
where
\[
 \begin{split}
  S_h= & (\alpha_h(x)\alpha_{h+1}(y_1)-\alpha_{h}(y_1)\alpha_{h+1}(x))+\\
  &\;+(-1)^\epsilon  \sum_{2\mid i} (\alpha_h(y_i)\alpha_{h+1}(y_{i+1})-\alpha_{h}(y_{i+1})\alpha_{h+1}(y_i)), \\
  S'_h =&  (\alpha_h(x)\alpha_{h+1}(z_1)-\alpha_{h}(z_1)\alpha_{h+1}(x))+\\
  &\;+(-1)^{\epsilon}\sum_{ 2\mid j} (\alpha_h(z_j)\alpha_{h+1}(z_{j+1})-\alpha_{h}(z_{j+1})\alpha_{h+1}(z_j)),
 \end{split}\]
 with $\epsilon=0$ if $G$ is as in (1.1.a), and  $\epsilon=1$ if $G$ is as in (1.1.b).
If the sequence $\alpha_1,\ldots,\alpha_n$ satisfies condition~\eqref{eq:cup trivial}, then one has $S_h=S'_h=0$ for $h=1,\ldots,n-1$, as $\{\chi\smallsmile\varphi_1,\chi\smallsmile\psi_1\}$ is a basis of $\rmH^2(G,\Z/p)$.

From now on, we will assume that $p>3$ while considering a pro-$p$ group $G$ as in (1.1.b), unless stated otherwise.



\subsection{3-fold Massey products}

We are ready to prove the following.

 \begin{prop}\label{prop:strong massey 3}
A pro-$p$ group $G$ satisfies the 3-Massey vanishing property in the following cases:
\begin{itemize}
 \item[(a)] if $G$ is as in {\rm (1.1.a)};
 \item[(b)] if $G$ is as in {\rm (1.1.b)} and $p>3$.
\end{itemize}  
 \end{prop}

\begin{proof}
 Let $\alpha_1,\alpha_2,\alpha_3$ be a sequence of elements of $\rmH^1(G,\Z/p\Z)$ satisfying \eqref{eq:cup trivial}.
 Then $S_1=S_1'=S_2=S_2'=0$ (cf. \S~\ref{ssec:massey unip}).
 Our goal is to construct a morphism $\rho\colon G\to\dbU_4$ such that $\rho_{1,2}=\alpha_1$, $\rho_{2,3}=\alpha_2$, $\rho_{3,4}=\alpha_3$.
 
 For every $w\in\calX$ set 
 \[
  A(w)=I+\alpha_1(w)E_{1,2}+\alpha_2(w)E_{2,3}+\alpha_3(w)E_{3,4}\in\dbU_4,
 \]
where $I$ denotes the $4\times4$ identity matrix.
If $G$ is as in (1.1.a), then one computes
\begin{equation}\label{eq:C 3 a}
  \begin{split}
  C&=[A(x),A(y_1)]\cdots[A(y_{d_1-1}),A(y_{d_1})] \\
   &= I+E_{1,4}\left(\alpha_1(y_1)\alpha_2(x)\alpha_3(y_1)+\sum_{2\mid i}\alpha_1(y_i)\alpha_2(y_{i+1})\alpha_3(y_i)\right)\\
C'&=[A(x),A(z_1)]\cdots[A(z_{d_2-1}),A(z_{d_2})] \\
   &= I+E_{1,4}\left(\alpha_1(z_1)\alpha_2(x)\alpha_3(z_1)+\sum_{2\mid j}\alpha_1(z_j)\alpha_2(z_{j+1})\alpha_3(z_j)\right);
 \end{split}\end{equation}
while if $G$ is as in (1.1.b), then one computes
\begin{equation}\label{eq:C 3 b}
  \begin{split}
  C&=A(y_1)^p[A(y_1),A(x)]\cdots[A(y_{d_1-1}),A(y_{d_1})] \\
   &= I+E_{1,4}\left(\alpha_1(x)\alpha_2(y_1)\alpha_3(x)+\sum_{2\mid i}\alpha_1(y_i)\alpha_2(y_{i+1})\alpha_3(y_i)\right)\\
C'&=A(z_1)^p[A(z_1),A(x)]\cdots[A(z_{d_2-1}),A(z_{d_2})] \\
   &= I+E_{1,4}\left(\alpha_1(x)\alpha_2(z_1)\alpha_3(x)+\sum_{2\mid j}\alpha_1(z_j)\alpha_2(z_{j+1})\alpha_3(z_j)\right).
 \end{split}\end{equation}
 --- observe that the exponent of $\dbU_4$ is $p$, as $p>4$, and thus $A(y_1)^p=A(z_1)^p=I$.

In both cases, $C,C'\in\mathrm{Z}(\dbU_4)$, and therefore the assignment $w\mapsto A(w)$ for every $w\in\calX$ yields a morphism $\bar\rho\colon G\to\dbU_4/\mathrm{Z}(\dbU_4)$ satisfying $\bar\rho_{h,h+1}=\alpha_h$ for $h=1,2,3$.
Thus, $\langle\alpha_1,\alpha_2,\alpha_3\rangle\neq\varnothing$ by Proposition~\ref{prop:representations}.

Moreover, if $C=C'=I$ then the same assignment yields a morphism $\rho\colon G\to\dbU_4$ with the desired properties.
In particular, by \eqref{eq:C 3 a}--\eqref{eq:C 3 b} one has $C=I$ if $\alpha_1(w)=\alpha_3(w)=0$ for every $w=y_1,\ldots,y_{d_1}$, or for every $w=y_2,\ldots,y_{d_1}$ and $w=x$; and analogously $C'=I$ if $\alpha_1(w)=\alpha_3(w)=0$ for every $w=z_1,\ldots,z_{d_{d_2}}$, or for every $w=z_2,\ldots,z_{d_2}$ and $w=x$.

On the other hand, if $C\neq I$ then $\chi\smallsmile\varphi_1=\pm\trg(r_1 G_{(3)})$ belongs to $\langle\alpha_1,\alpha_2,\alpha_3\rangle$, and analogusly if $C'\neq I$ then $\chi\smallsmile\psi_1=\pm\trg(r_2 G_{(3)})$ belongs to $\langle\alpha_1,\alpha_2,\alpha_3\rangle$ (cf. \cite[Lemma~3.7]{mt:massey}) --- here the sign depends on whether the relations are as in (1.1.a) or in (1.1.b).
Now, if $\alpha_h(y_i)\neq 0$ for some $h=1,3$ and $i\in\{2,\ldots,d_1\}$, then 
$$\chi\smallsmile\varphi_1=\alpha_h\smallsmile\beta\qquad\text{for some }\beta\in\rmH^1(G,\Z/p\Z).$$
Analogously, if $\alpha_h(z_j)\neq 0$ for some $h=1,3$ and $j\in\{2,\ldots,d_2\}$, then 
$$\chi\smallsmile\psi_1=\alpha_h\smallsmile\beta\qquad\text{for some }\beta\in\rmH^1(G,\Z/p\Z).$$
Moreover, if $\alpha_h(x)\neq 0$ for some $h=1,3$, then 
$$\chi\smallsmile\varphi_1=\alpha_h\smallsmile\beta\qquad\text{and}\qquad\chi\smallsmile\psi_1=\alpha_h\smallsmile\beta'$$
for some $\beta,\beta'\in\rmH^1(G,\Z/p\Z)$.
Therefore, Remark~\ref{rem:massey cup} implies that if $C\neq I$ or $C'\neq I$ then $0\in\langle\alpha_1,\alpha_2,\alpha_3\rangle$ anyway.
\end{proof}

\begin{rem}\label{rem:3MP}\rm
If $p=3$ and $G$ as in (1.1.b), then $G$ does not satisfy the 3-Massey vanishing property.
Indeed, set $\alpha_1=\alpha_3=\varphi_1+\psi_1$, and $\alpha_2=\varphi_1$.
Then $$\alpha_1\smallsmile\alpha_2=\alpha_2\smallsmile\alpha_3=\pm(\varphi_1\smallsmile\psi_1)=0.$$
It is easy to see that one may construct a morphism of pro-$p$ groups $\bar\rho\colon G\to\dbU_4/\mathrm{Z}(\dbU_4)$ such that $\bar\rho_{1,2}=\bar\rho_{3,4}=\alpha_1$ and $\bar\rho_{2,3}=\alpha_2$ --- and thus $\langle\alpha_1,\alpha_2,\alpha_1\rangle\neq\varnothing$ by Proposition~\ref{prop:representations} ---; but, on the other hand, one may not construct a morphism of pro-$p$ groups $\rho\colon G\to\dbU_4$ satisfying $\rho_{1,2}=\rho_{3,4}=\alpha_1$ and $\rho_{2,3}=\alpha_2$ --- so that $0\notin\langle\alpha_1,\alpha_2,\alpha_1\rangle$ by Proposition~\ref{prop:representations}.
\end{rem}


\subsection{4-fold Massey products}

 \begin{prop}\label{prop:strong massey 4}
A pro-$p$ group $G$ as in Theorem~\ref{thm:intro1} satisfies the strong 4-Massey vanishing property.
 \end{prop}
 
\begin{proof}
Let $\alpha_1,\ldots,\alpha_4$ be a sequence of four elements of $\rmH^1(G,\Z/p\Z)$ satisfying
\eqref{eq:cup trivial}.
Our goal is to construct a homomorphism of pro-$p$ groups $\rho\colon G\to\dbU_5$ such that $\rho_{h,h+1}=\alpha_h$ for $h=1,\ldots,5$, so that the claim follows by Proposition~\ref{prop:representations}.

Let $I$ denote the identity matrix of the group $\dbU_5$.
For every $w\in\calX=\{x,y_1,\ldots,z_{d_2}\}$ set 
\[
A(w)=\left( \begin{array}{ccccc} 1 &\alpha_1(w)&0&0&0 \\ &1&\alpha_2(w)&0&0 \\ &&1&\alpha_3(w)&0 \\ &&&1&\alpha_4(w) \\ &&&&1  \end{array}  \right)\in\dbU_5. 
\]
Moreover, put
\[\begin{split} 
C &=(c_{hl})= A(y_1)^{\epsilon p}\cdot[A(x),A(y_1)]^{(-1)^\epsilon}\cdots\left[A(y_{d_1-1}),A(y_{d_1})\right],\\
C'&=(c'_{hl})= A(z_1)^{\epsilon p}\cdot[A(x),A(z_1)]^{(-1)^\epsilon}\cdots\left[A(z_{d_2-1}),A(z_{d_2})\right].
\end{split}\]
We will consider the matrix $C$ as a function of the matrices $A(x),\ldots,A(y_{d_1})$, and the matrix $C'$ as a function of the matrices $A(x),A(z_1),\ldots,A(z_{d_2})$.

Since $p\geq5$, the exponent of the $p$-group $\dbU_5$ is $p$, and thus $A(y_1)^p=A(z_1)^p=I$.
Moreover, for every $w,w'\in\calX$, the $(h,h+1)$-entry of $[A(w),A(w')]$ is 0 for every $h=1,\ldots,4$, and thus also 
$c_{h,h+1}=c'_{h,h+1}=0$.
Moreover, for $h=1,2,3$ one has $c_{h,h+2}=S_h$ and $c'_{h,h+2}=S'_h$ --- which are equal to 0 by \eqref{eq:cup trivial}.

We split the proof in the analysis of the following three cases.
Our aim is to modify suitably the matrices $A(w)$ --- without modifying the $(h,h+1)$-entries with $h=1,\ldots,4$ --- in order to obtain $C=C'=I$.

\medskip

\noindent {\bf Case 1.} Suppose first that:
\begin{itemize}
 \item[(1.a)] $\alpha_2(x)=\alpha_2(y_i)=0$ for all $2\leq i\leq d_1$; or
\item[(1.b)] $\alpha_3(x)=\alpha_3(y_i)=0$ for all $2\leq i\leq d_1$.
\end{itemize}
Since $S_1=S_2=S_3=0$ by \eqref{eq:cup trivial}, one has
\begin{eqnarray}\label{eq:0 cond 2a}
 \alpha_1(x)\alpha_2(y_1)=\alpha_2(y_1)\alpha_3(x) &=&0, \\
 \alpha_2(x)\alpha_3(y_1)=\alpha_3(y_1)\alpha_4(x) &=&0,
\label{eq:0 cond 2b}
\end{eqnarray}
respectively in case (1.a) and in case (1.b).
Applying \eqref{eq:0 cond 2a}--\eqref{eq:0 cond 2b}, one computes 
\[
[A(y_1),A(x)]=\begin{cases}
           I+\left(\alpha_3(y_1)\alpha_4(x)-\alpha_3(x)\alpha_4(y_1)\right)E_{3,5} &\text{in case (1.a)}, \\
           I+\left(\alpha_1(y_1)\alpha_2(x)-\alpha_2(x)\alpha_1(y_1)\right)E_{1,3} &\text{in case (1.b)}, 
          \end{cases}\]
and
\[
[A(y_i),A(y_{i+1})]=\begin{cases}
           I+\left(\alpha_3(y_i)\alpha_4(y_{i+1})-\alpha_3(y_{i+1})\alpha_4(y_i)\right)E_{3,5} &\text{in case (1.a)}, \\
           I+\left(\alpha_1(y_i)\alpha_2(y_{i+1})-\alpha_2(y_{i+1})\alpha_1(y_i)\right)E_{1,3} &\text{in case (1.b)}, 
          \end{cases}\]
for $i=2,4,\ldots,d_1-1$.          
Altogether, one has $C=I+S_3E_{3,5}$ in case (1.a) and $C=I+S_1E_{1,3}$ in case (1.b), so that in both cases $C=I$ by \eqref{eq:cup trivial}.

\noindent Analogously, if $\alpha_2(x)=\alpha_2(z_j)=0$ for all $2\leq j\leq d_2$, or if $\alpha_3(x)=\alpha_3(z_j)=0$ for all $2\leq j\leq d_2$, then $C'=I$.
This completes the analysis of case 1.

\medskip

\noindent {\bf Case 2.}
Now suppose that $\alpha_1(x)=\alpha_4(x)=\alpha_1(y_i)=\alpha_4(y_i)=0$ for all $2\leq i\leq d_1$.
Since $S_1=S_2=S_3=0$ by \eqref{eq:cup trivial}, one has
\begin{equation}\label{eq:0 cond 3}
 \alpha_1(y_1)\alpha_2(x)=\alpha_3(x)\alpha_4(y_1)=0.
\end{equation}
Then one computes
\[
 \begin{split}
  [A(y_1),A(x)] &= I+\left(\alpha_2(y_1)\alpha_3(x)-\alpha_2(x)\alpha_3(y_1)\right)E_{2,4}+\alpha_2(x)\alpha_3(y_1)\alpha_4(y_1)E_{2,5},\\
  [A(y_i),A(y_{i+1})]&=I+\left(\alpha_2(y_i)\alpha_3(y_{i+1})-\alpha_2(y_{i+1})\alpha_3(y_i)\right)E_{2,4},
 \end{split}
\]
where we apply \eqref{eq:0 cond 3} to obtain the first equality, and in the second one $i$ runs through the even positive integers between $2$ and $d_1-1$.
If $\alpha_2(x)\alpha_3(y_1)\alpha_4(y_1)=0$ then it is straightforward to see that $C=I+S_2E_{2,4}=I$.
Otherwise, $\alpha_2(x)\neq0$, so that \eqref{eq:0 cond 3} implies that $\alpha_1(y_1)=0$.
In this case, set $$\tilde A=I-\alpha_3(y_1)\alpha_4(y_1)E_{3,5}.$$
Then 
\[
 \left[\tilde A,A(x)\right]=I-\alpha_2(x)\alpha_3(y_1)\alpha_4(y_1)E_{2,5}, 
\]
and 
\[\begin{split}
 \left[A(y_1)\tilde A,A(x)\right] &= \underbrace{\left[A(y_1),[\tilde A,A(x)]\right]}_{=I}\left[\tilde A,A(x)\right][A(y_1),A(x)]\\
 &= I+\left(\alpha_2(y_1)\alpha_3(x)-\alpha_2(x)\alpha_3(y_1)\right)E_{2,4}.   
  \end{split}
\]
Therefore, replacing $A(y_1)$ with $A(y_1)\tilde A$ yields $c_{2,4}=S_2=0$ and $C_{hl}=0$ for $h<l$, i.e., $C=I$.

\noindent An analogous argument yields $C'=I$ --- after replacing suitably the matrix $A(z_1)$ if needed --- if $\alpha_1(x)=\alpha_3(x)=\alpha_1(z_j)=\alpha_3(z_j)=0$ for all $1\leq j\leq d_2$.
This completes the analysis of case 2.

\medskip

\noindent {\bf Case 3.}
Finally, if none of the above two assumptions on the triviality of the values $\alpha_h(x)$ and $\alpha_h(y_i)$, with $2\leq i\leq d_1$, hold true, then
\begin{itemize}
 \item[(3.a)] there are $w,w'\in\{x,y_2,\ldots,y_{d_1}\}$ --- possibly $w=w'$ --- such that $\alpha_1(w)\neq0$ and $\alpha_2(w')\neq0$, or
 \item[(3.b)] there are $w,w'\in\{x,y_2,\ldots,y_{d_1}\}$ --- possibly $w=w'$ --- such that $\alpha_4(w)\neq0$ and $\alpha_3(w')\neq0$.
\end{itemize}
Suppose we are in case (3.a).
If $w=x$ or $w=y_i$ with $i$ odd, set 
$$\tilde A=\begin{cases}
            I+\frac{c_{1,4}}{\alpha_1(w)}E_{2,4}& \text{if }w\in\{\:x,y_3,\ldots,y_{d_1}\:\}\\
            I-\frac{c_{1,4}}{\alpha_1(w)}E_{2,4}& \text{if }w\in\{\:y_i\:\mid\:i\text{ is even}\:\},
           \end{cases}$$
and replace $A(y_1)$ with $A(y_1)\tilde A$, if $w=x$, or $A(y_{i-1})$ with $A(y_{i-1})\tilde A$ if $w=y_i$ with $i$ odd, or $A(y_{i+1})$ with $A(y_{i+1})\tilde A$, if $w=y$ with $i$ even.
After the replacement, one has $c_{hl}=0$ for $h< l\leq h+2$, and for $(h,l)=(1,4)$.
Then, set
$$\tilde A'=\begin{cases}
            I+\frac{c_{2,5}}{\alpha_1(w')}E_{3,5}& \text{if }w'\in\{\:x,y_3,\ldots,y_{d_1}\:\}\\
            I-\frac{c_{2,5}}{\alpha_1(w')}E_{3,5}& \text{if }w'\in\{\:y_i\:\mid\:\text{ is even}\:\},
           \end{cases}$$
and replace $A(y_1)$ with $A(y_1)\tilde A'$, if $w=x$, or $A(y_{i-1})$ with $A(y_{i-1})\tilde A'$ if $w=y_i$ with $i$ odd, or $A(y_{i+1})$ with $A(y_{i+1})\tilde A'$, if $w=y$ with $i$ even.
After this further replacement, one has $c_{hl}=0$ for $h< l\leq h+3$.
Finally, set 
$$\tilde A''=\begin{cases}
            I+\frac{c_{1,5}}{\alpha_1(w)}E_{2,5}& \text{if }w\in\{\:x,y_3,\ldots,y_{d_1}\:\}\\
            I-\frac{c_{1,5}}{\alpha_1(w)}E_{2,5}& \text{if }w\in\{\:y_i\:\mid\:i\text{ is even}\:\},
           \end{cases}$$
and replace $A(y_1)$ with $A(y_1)\tilde A''$, if $w=x$, or $A(y_{i-1})$ with $A(y_{i-1})\tilde A''$ if $w=y_i$ with $i$ odd, or $A(y_{i+1})$ with $A(y_{i+1})\tilde A''$, if $w=y$ with $i$ even.
After this last replacement, one has $C=I$.

\medskip

\noindent Now suppose we are in case (3.b).
If $w=x$ or $w=y_i$ with $i$ odd, set 
$$\tilde A=\begin{cases}
            I-\frac{c_{2,5}}{\alpha_4(w)}E_{3,4}& \text{if }w\in\{\:x,y_3,\ldots,y_{d_1}\:\}\\
            I+\frac{c_{2,5}}{\alpha_4(w)}E_{3,4}& \text{if }w\in\{\:y_i\:\mid\:  i\text{ is even}\:\},
           \end{cases}$$
and replace $A(y_1)$ with $A(y_1)\tilde A$, if $w=x$, or $A(y_{i-1})$ with $A(y_{i-1})\tilde A$ if $w=y_i$ with $i$ odd, or $A(y_{i+1})$ with $A(y_{i+1})\tilde A$, if $w=y$ with $i$ even.
After the replacement, one has $c_{hl}=0$ for $h< l\leq h+2$, and for $(h,l)=(2,5)$.
Then, set
$$\tilde A'=\begin{cases}
            I-\frac{c_{1,4}}{\alpha_3(w')}E_{1,3}& \text{if }w'\in\{\:x,y_3,\ldots,y_{d_1}\:\}\\
            I+\frac{c_{1,4}}{\alpha_3(w')}E_{1,3}& \text{if }w'\in\{\:y_i\:\mid\:i\text{ is even}\:\},
           \end{cases}$$
and replace $A(y_1)$ with $A(y_1)\tilde A'$, if $w=x$, or $A(y_{i-1})$ with $A(y_{i-1})\tilde A'$ if $w=y_i$ with $i$ odd, or $A(y_{i+1})$ with $A(y_{i+1})\tilde A'$, if $w=y$ with $i$ even.
After this further replacement, one has $c_{hl}=0$ for $h< l\leq h+3$.
Finally, set 
$$\tilde A''=\begin{cases}
            I-\frac{c_{1,5}}{\alpha_1(w)}E_{1,4}& \text{if }w\in\{\:x,y_3,\ldots,y_{d_1}\:\}\\
            I+\frac{c_{1,5}}{\alpha_1(w)}E_{1,4}& \text{if }w\in\{\:y_i\:\mid\: i\text{ is even}\:\},
           \end{cases}$$
and replace $A(y_1)$ with $A(y_1)\tilde A''$, if $w=x$, or $A(y_{i-1})$ with $A(y_{i-1})\tilde A''$ if $w=y_i$ with $i$ odd, or $A(y_{i+1})$ with $A(y_{i+1})\tilde A''$, if $w=y$ with $i$ even.
After this last replacement, one has $C=I$.

\noindent Moreover, if none of the above two assumptions on the triviality of the values $\alpha_h(x)$ and $\alpha_h(z_j)$, with $2\leq j\leq d_2$, hold true, the same argument produces suitable matrices $A(z_1),\ldots,A(z_{d_2})$ such that the matrix $C'$ is the identity matrix. 
This concludes the analysis of case 3.

\medskip

Altogether, the assignment $w\mapsto A(x)$ for every $w\in\calX$ --- with the matrices $A(w)$'s suitably modified in case of need --- yields a homomorphism of pro-$p$ groups $\rho\colon G\to\dbU_5$ with the desired properties.
\end{proof}

We believe that the answer to the following questions is positive.

\begin{question}
\begin{itemize}
 \item[(a)]  Let $G$ be as in {\rm(1.1.a)}.
 Does $G$ satisfy the strong $n$-Massey vanishing property for every $n\geq3$?
\item[(b)]  Let $G$ be as in {\rm(1.1.b)}.
 Does $G$ satisfy the strong $n$-Massey vanishing property for every $3\leq n<p$?
\end{itemize}

\end{question}






\section{The Mina\v{c}-T\^an pro-$p$ group}

We focus now on the Mina\v{c}-T\^an pro-$p$ group
\[
 G=\langle\:x_1,\ldots,x_5\:\mid\:r=1\:\rangle\qquad\text{with }r=\left[[x_1,x_2],x_3\right][x_4,x_5].
\]
Using Proposition~\ref{prop:representations}, one may show that $G$ does not satisfy the 3-Massey vanishing property (cf. \cite[Ex.~7.2]{mt:massey}).
Our aim is to show that $G$ cannot complete into a 1-cyclotomic oriented pro-$p$ group with torsion-free orientation.


\subsection{Kummerianity and 1-cyclotomicity}\label{ssec:MT kummer}

\begin{prop}\label{prop:MT kummer}
Let $G$ be the Mina\v{c}-T\^an pro-$p$ group, and let $\theta\colon G\to 1+p\Z_p$ be a torsion-free orientation.
Then the oriented pro-$p$ group $(G,\theta)$ is Kummerian if, and only if, $x_4,x_5\in\Ker(\theta)$, and:
\begin{itemize}
 \item[(a)] $x_3\in\Ker(\theta)$; or
 \item[(b)] $x_1,x_2\in\Ker(\theta)$.
\end{itemize} 
\end{prop}

\begin{proof}
Let $c\colon G\to\Z_p(\theta)$ be an arbitrary continuous 1-cocycle, and set $c(x_i)=\lambda_i$ for $i=1,\ldots,5$.
Applying \eqref{eq:1cocycle}--\eqref{eq:1cocycle comm} one computes $c(r)=c([[x_1,x_2],x_3])+c([x_4,x_5])$, and
\begin{equation}\label{eq:c on r}
\begin{split}
c([[x_1,x_2],x_3]) &= \theta(x_1x_2)^{-1}\left(\theta(x_3)^{-1}-1\right)\left(\lambda_1(1-\theta(x_2))-\lambda_2(1-\theta(x_1))\right),\\
c([x_4,x_5]) &=\theta(x_4x_5)^{-1}\left(\lambda_4(1-\theta(x_5))-\lambda_5(1-\theta(x_4))\right).
\end{split}\end{equation}
On the other hand, $c(r)=0$ as $r=1$.

Suppose that $(G,\theta)$ is Kummerian.
Then by Lemma~\ref{lem:labute inf}, we may prescribe arbitrary values to $\lambda_1,\ldots,\lambda_5$.
If $\lambda_4=1$ and $\lambda_i=0$ for $i\neq 4$, from \eqref{eq:c on r} and from the fact that $c(r)=0$ one obtains $0=1\cdot(1-\theta(x_5))$, and thus $\theta(x_5)=1$.
Analogously, if $\lambda_5=1$ and $\lambda_i=0$ for $i\neq 5$, one deduces $\theta(x_4)=1$.
Finally, if $\lambda_4=\lambda_5=0$ from \eqref{eq:c on r} one obtains
\[ 0=c(r)=\theta(x_1x_2)^{-1}\left(\theta(x_3)^{-1}-1\right)\left(\lambda_1(1-\theta(x_2))-\lambda_2(1-\theta(x_1))\right),\]
and the arbitrariness of $\lambda_1,\lambda_2$ implies that $\theta(x_3)=1$ or $\theta(x_1)=\theta(x_2)=1$.

Conversely, suppose that $x_4,x_5\in\Ker(\theta)$, and at least one of the hypothesis (i)--(ii) holds true.
Then for any choice for $\lambda_4,\lambda_5$, by \eqref{eq:c on r} one has $c([x_4,x_5])=0$.
On the other hand, one has
\[
 c([[x_1,x_2],x_3])=\begin{cases} 0\cdot(\lambda_1(1-\theta(x_2))-\lambda_2(1-\theta(x_1)))=0 & 
 \text{if }x_3\in\Ker(\theta), \\
    \left(\theta(x_3)^{-1}-1\right)(\lambda_1\cdot0-\lambda_2\cdot0)=0 & \text{if }x_1,x_2\in\Ker(\theta).                    \end{cases}\]
Altogether, any choice for $\lambda_1,\ldots,\lambda_5$ yields a well-defined continuous 1-cocycle $c\colon G\to\Z_p(\theta)$,
and thus $(G,\theta)$ is Kummerian by Lemma~\ref{lem:labute inf}.
\end{proof}

Now consider the subgroup $H$ of $G$ generated by $x_3,x_4,x_5$ and by $y=[x_1,x_2]$.
Then $H$ is subject to the relation
$$r=[y,x_3][x_4,x_5]=1.$$
If $(G,\theta)$ is a 1-cyclotomic oriented pro-$p$ group, with $\theta$ a torsion-free orientation, then $=(H,\theta\vert_H)$ is Kummerian.
Therefore, if $c'\colon H\to\Z_p(\theta\vert_H)$ is a continuous 1-cocycle, applying \eqref{eq:1cocycle}--\eqref{eq:1cocycle comm} one obtains
\[
\begin{split}
 0=c'(r) &=c'([y,x_3])+c'([x_4,x_5])\\&=\theta(yx_3)^{-1}\left(c'(y)(1-\theta(x_3))-c'(x_3)(1-\theta(y))\right)+0\\
 &=\theta(yx_3)^{-1}c'(y)(1-\theta(x_3)),
\end{split}
\]
as $\theta(x_4)=\theta(x_5)=1$ by Proposition~\ref{prop:MT kummer}, and $y\in G'\subseteq\Ker(\theta)$.
Since $c'(y)$ may be arbitrarily chosen by Lemma~\ref{lem:labute inf}, one deduces $\theta(x_3)=1$.
This proves the following.

\begin{lem}
Let $G$ be the Mina\v{c}-T\^an pro-$p$ group, and let $\theta\colon G\to 1+p\Z_p$ be a torsion-free orientation.
If the oriented pro-$p$ group $(G,\theta)$ is 1-cyclotomic then $x_3,x_4,x_5\in\Ker(\theta)$.
\end{lem}

Moreover, if $(G,\theta)$ is 1-cyclotomic we may suppose without loss of generality that $x_2\in\Ker(\theta)$, too.
Indeed, let $v_p\colon\Z_p\to \dbN$ denote the $p$-adic valuation, and let $k\geq1$ be such that $\Img(\theta)=1+p^k\Z_p$.

Suppose first that $v_p(\theta(x_2)-1)=k$ and $v_p(\theta(x_1)-1)>k$, and set $z=x_2x_1$.
Then $\{z,x_2,x_3,x_4,x_5\}$ is a minimal generating set of $G$, $v_p(\theta(z)-1)=k$, and $G$ is subject to the relation 
\[
 \left[[z,x_2],x_3\right][x_4,x_5]=1,
\]
as $[x_2x_1,x_2]=[x_1,x_2]$.
Hence, we may assume $v_p(\theta(x_1)-1)=k$.

Consequently, there exists $\lambda\in\Z_p$ such that $\theta(x_2)=\theta(x_1)^\lambda$.
Now set $z=x_1^{-\lambda}x_2$.
Then $\{x_1,z,x_3,x_4,x_5\}$ is a minimal generating set of $G$, $\theta(z)=\theta(x_2)\theta(x_1)^{-\lambda}=1$, and $G$ is subject to the relation 
\[
 \left[[x_1,z],x_3\right][x_4,x_5]=1,
\]
as $[x_1,x_1^{-\lambda} x_2]=[x_1,x_2]$.

Therefore, from now on $\theta\colon G\to1+p\Z_p$ will denote a torsion-free orientation satisfying $x_2,\ldots,x_5\in\Ker(\theta)$.


\subsection{The subgroup $U$}\label{ssec:MT U}

Put $u=x_1^p$ and $t=x_1^{-1}x_3$.
Let $\phi\colon G\to\Z/p$ be the homomorphism defined by $\phi(x_1)=\phi(x_3)=1$ and $\phi(x_i)=0$ for $i=2,4,5$, and let $U$ be the kernel of $\phi$.
Then $U$ is a normal subgroup of $G$ of index $p$, and it is generated as a normal subgroup of $G$ by $\{u,t,x_2,x_4,x_5\}$.
In fact, $U$ is generated as a pro-$p$ group by the set
\[
 \mathcal{X}_U=\left\{\:u,\: t^{x_1^h},\:x_2^{x_1^h},\:x_4^{x_1^h},\:x_5^{x_1^h}\:\mid\:h=0,\ldots,p-1\:\right\},
\]
as $G/U=\{U,x_1U,\ldots,x_1^{p-1}U\}$.
We need to find a subset of $\calX_U$ which minimally generates $U$ as a pro-$p$ group.

\begin{prop}\label{prop:MT U}
 The set 
 \[
  \mathcal{Y}_U=\left\{\:t,\:x_2,\:x_2^{x_1},\: t^{x_1^h},\:x_4^{x_1^h},\:x_5^{x_1^h}\:\mid\:h=0,\ldots,p-1\:\right\},
 \]
is a minimal generating set of $U$ as a pro-$p$ group.
Moreover, the abelian pro-$p$ group $U^{\ab}$ is not torsion-free.
\end{prop}

\begin{proof}
The subgroup $U$ is the pro-$p$ group generated by $\calX_U$ and subject to the $p$-relations $r^{x_1^h}=1$, $h=0,\ldots,p-1$.
Since $x_3=x_1t$, one computes
\begin{equation}\label{eq:rel pastrugnata}
\begin{split}
 [[x_1,x_2],x_3] &= [x_1,x_2]^{-1}\cdot [x_1,x_2]^{x_3}\\
 &=[x_2,x_1]\cdot \left[x_1,x_2^{x_1}\right]^{t}\\
 &=x_2^{-1}\cdot x_2^{x_1}\cdot\left(\left(x_2^{x_1^2}\right)^{-1}x_2^{x_1}\right)^{t}.
\end{split}
\end{equation}
From \eqref{eq:rel pastrugnata}, and from the relation $r=1$, one deduces the equivalence
\begin{equation}\label{eq:equivalence U MT}
 \left(x_2^{x_1^2}\right)^{-1}\cdot\left(x_2^{x_1}\right)^2\cdot x_1^{-1}\equiv 1\mod U',
\end{equation}
as $[x_4,x_5]\in U'$ and $t\in U$.

Hence, $U^{\ab}$ is the abelian pro-$p$ group generated by $\calX_{U^{\ab}}=\{wU'\:\mid\:w\in\calX_U\}$ and subject to the $p$ relations induced by the equivalences $((x_2^{x_1^2})^{-1}(x_2^{x_1})^2 x_1^{-1})^{x_1^h}\equiv 1\bmod U'$, namely
\begin{equation}\label{eq:mod Up}
 \begin{split}
x_2^{x_1^2} &\equiv\left(x_2^{x_1}\right)^2 x_1^{-1}\mod U',\qquad\text{for }h=0,\\
x_2^{x_1^3} &\equiv\left(x_2^{x_1^2}\right)^2 \left(x_1^{x_2}\right)^{-1}\equiv 
\left(x_2^{x_1}\right)^3 x_1^{-2}\mod U',\qquad\text{for }h=1,\\
&\;\vdots\\
x_2^{x_1^{p}}&\equiv\left(x_2^{x_1^{p-1}}\right)^2 \left(x_1^{p-2}\right)^{-1}\equiv 
\left(x_2^{x_1}\right)^p x_1^{1-p}\mod U',\qquad\text{for }h=p-2,\\
x_2^{x_1^{p+1}}&\equiv\left(x_2^{x_1}\right)^2\cdot x_1^{-1}\equiv 
\left(x_2^{x_1}\right)^{p+1} x_1^{-p}\mod U',\qquad\text{for }h=p-1.
 \end{split}\end{equation}
On the one hand, from \eqref{eq:mod Up} one deduces that the coset $x_2^{x_1^h}U'$ is generated by $x_2U'$ and $x_2^{x_1}U'$ for every $h=2,\ldots,p-1$, so that $\calY_{U^{\ab}}=\{wU'\:\mid\:w\in\calY_U\}$ generates $U^{\ab}$ as an abelian pro-$p$ group.
On the other hand, from the equivalences with $h=p-2$ and $h=p-1$ one deduces that
\[\begin{split}
 \left(x_2^{x_1}\right)^p x_1^{1-p}\left(x_2^{u}\right)^{-1}&\equiv\left(x_2^{x_1}\right)^p x_1^{1-p-1}\equiv 
 \left(x_2^{x_1} x_1^{-1}\right)^p\equiv 1\mod U',\\   
 \left(x_2^{x_1}\right)^{p+1} x_1^{-p}\left(x_2^{ux_1}\right)^{-1}&\equiv\left(x_2^{x_1}\right)^{p+1-1} x_1^{-p}\equiv 
 \left(x_2^{x_1} x_1^{-1}\right)^p\equiv 1\mod U',   
  \end{split}
\]
as $x_2^u\equiv x_2\bmod U'$; therefore they yield equivalent relations in $U^{\ab}$.
Altogether, $U^{\ab}$ is the abelian pro-$p$ group minimally generated by $\calX_{U^{\ab}}$ and subject to the relation 
$$\left((x_2U')^{-1}\cdot x_2^{x_1}U'\right)^p=1.$$
Hence $U^{\ab}$ is not torsion-free, and $\calY_U$ is a minimal generating set of $U$ by Fact~\ref{fact:gen Gab}.
 \end{proof}

 From Proposition~\ref{prop:MT U}, one deduces that $G$ is not absolutely torsion-free, and thus the oriented pro-$p$ group $(G,\mathbf{1})$ is not 1-cyclotomic.
 

\subsection{1-cyclotomicity and the Mina\v c-T\^an pro-$p$ group}

We are ready to prove Theorem~\ref{thm:MinacTan G}.

\begin{proof}[Proof of Theorem~\ref{thm:MinacTan G}]
Suppose for contradiction that there exists a torsion free orientation $\theta\colon G\to1+p\Z_p$ such that the oriented pro-$p$ group $(G,\theta)$ is 1-cyclotomic.
Then by \S~\ref{ssec:MT kummer}, we may assume without loss of generality that $x_2,\ldots,x_5\in\Ker(\theta)$, while $\theta(x_1)\neq1$ by \S~\ref{ssec:MT U}.
Set $\lambda\in p\Z_p\smallsetminus\{0\}$ such that $\theta(x_1)=1+\lambda$.

Consider the oriented pro-$p$ group $(U,\theta\vert_U)$, and set $K=K_{\theta\vert_U}(U)$, $\bar U=U/K$.
Our goal is to show that the oriented pro-$p$ group $(\bar U,(\theta\vert_U)_{/K})$ is not $(\theta\vert_U)_{/K}$-abelian, so that $(U,\theta\vert_U)$ is not Kummerian by Proposition~\ref{prop:kummer thetabelian}, and thus $(G,\theta)$ is not 1-cyclotomic.

Since $K\subseteq \Phi(U)$, by Proposition~\ref{prop:MT U} the set $\calY_{\bar U}=\{wK\:\mid\:w\in\calY_U\}$ is a minimal generating set of $\bar U$.
Now, since $\theta(t)=\theta(x_1)=(1+\lambda)^{-1}$, one has $w^t\equiv w^{1+\lambda}\bmod K$ for every $w\in U$.
Therefore, from \eqref{eq:rel pastrugnata}, and from the fact that $[x_4,x_5]\in\Ker(\theta\vert_U)'\subseteq K$, one obtains
\[
[x_1,x_2]^{-1}\left([x_1,x_2]^{x_1}\right)^t\equiv[x_1,x_2]^{-1}\left([x_1,x_2]^{x_1}\right)^{(1+\lambda)^{-1}}\equiv 1\mod K,
\]
and consequently
\begin{equation}\label{eq:rel mod K MT}
\begin{split}
 [x_1,x_2]^{x_1} &\equiv[x_1,x_2]^{1+\lambda}\mod K,\\
 [x_1,x_2]^{x_1^2}&\equiv [x_1,x_2]^{(1+\lambda)^2}\mod K,\\
 &\;\vdots\\
 [x_1,x_2]^{x_1^{p-1}}&\equiv[x_1,x_2]^{(1+\lambda)^{p-1}}.
\end{split}
\end{equation}
Set  
\[\mu=(1+\lambda)^0+(1+\lambda)^1+\ldots+(1+\lambda)^{p-1}=\dfrac{(1+\lambda)^p-1}{\lambda}.\]
Then $\mu\neq0$ (as $\lambda\neq0$), and $p\mid\mu$.
Since $[x_1,x_2]=(x_2^{x_1})^{-1}x_2$, replacing the coset $x_2^{x_1}K$ with the coset $[x_1,x_2]K$ in $\calY_{\bar U}$ yields another minimal generating set --- let us call it $\calY'_{\bar U}$ --- of $\bar U$.
Now, from \eqref{eq:rel mod K MT} one obtains
\[
 \begin{split}
  [u,x_2]&=[x_1,x_2]^{x_1^{p-1}}\cdots[x_1,x_2]^{x_1}\cdot [x_1,x_2]\\
  &\equiv [x_1,x_2]^{(1+\lambda)^{p-1}}\cdots[x_1,x_2]^{1+\lambda}\cdot[x_1,x_2]\mod K\\
  &\equiv [x_1,x_2]^{\mu}\mod K
 \end{split}
\]
--- observe that $[x_1,x_2]^{x_i^h}\in\Ker(\theta\vert_U)$ for every $h$, and thus all such elements commute modulo $K$.
Therefore, one has the relation
\[
 \left([x_1,x_2]K\right)^\mu=[uK,x_2K]
\]
between elements of the minimal generating set $\calY'_{\bar U}$, and by \cite[Thm.~8.1]{eq:kummer} this relation prevents the oriented pro-$p$ group $(\bar U,(\theta\vert_U)_{/K})$ from being Kummerian --- and thus also $(\theta\vert_U)_{/K}$-abelian.
\end{proof}

From Theorem~\ref{thm:MinacTan G} we obtain a new family of pro-$p$ groups which cannot complete into 1-cyclotomic oriented pro-$p$ groups.

\begin{cor}
 Let $G$ be the pro-$p$ group with presentation
 \[G=\left\langle x_1,\ldots,x_n,x_{n+1},x_{n+2} \:\mid 
 \: \left[\left[\ldots[[x_1,x_2],x_3],\ldots x_{n-1}\right],x_n\right][x_{n+1},x_{n+2}]=1\right\rangle,
\]
with $n\geq3$.
Then $G$ cannot complete into a 1-cyclotomic oriented pro-$p$ group with torsion-free orientation.
\end{cor}

\begin{proof}
 Set $y=[\ldots[x_1,x_2],\ldots x_{n-2}]$, and let $H$ be the subgroup of $G$ generated by $\{y,x_{n-1},\ldots,x_{n+2}\}$.
 Then 
 \[
  H=\langle\:y,x_{n-1},\ldots,x_{n+2}\:\mid\:[[y,x_{n-1}],x_n][x_{n+1},x_{n+2}]\:\rangle
 \]
is isomorphic to the Mina\v c-T\^an pro-$p$ group, and hence it cannot complete into a 1-cyclotomic oriented pro-$p$ group with torsion-free orientation by Theorem~\ref{thm:MinacTan G}.
\end{proof}
The following question remains open (cf. \cite[Ex.~3.2]{BCQ}).

\begin{question}
Is the Mina\v c-T\^an pro-$p$ group $G$ a Bloch-Kato pro-$p$ group? Namely, is the $\Z/p\Z$-cohomology algebra of every closed subgroup of $G$ a quadratic algebra?
\end{question}

\section*{Declarations}
\subsection*{Ethical Approval} Not applicable. 

\subsection*{Competing interests} The author has no competing interests to declare that are relevant to the content of this article.
 
\subsection*{Authors' contributions} Not applicable.

\subsection*{Funding} The author was partially supported by the ``Giovani Talenti'' Prize (2017), funded by the University of Milano-Bicocca (Italy) and sponsored by the Accademia Nazionale dei Lincei.

\subsection*{Availability of data and materials} Not applicable.



\begin{bibdiv}
\begin{biblist}

\bib{BLMS}{article}{
   author={Benson, D.},
   author={Lemire, N.},
   author={Mina\v c, J.},
   author={Swallow, J.},
   title={Detecting pro-$p$-groups that are not absolute Galois groups},
   journal={J. Reine Angew. Math.},
   volume={613},
   date={2007},
   pages={175--191},
   issn={0075-4102},
}

\bib{BCQ}{article}{
   author={Blumer, S.},
   author={Cassella, A.},
   author={Quadrelli, C.},
   title={Groups of $p$-absolute Galois type that are not absolute Galois
   groups},
   journal={J. Pure Appl. Algebra},
   volume={227},
   date={2023},
   number={4},
   pages={Paper No. 107262},
}

\bib{BQW}{article}{
   author={Blumer, S.},
   author={Quadrelli, C.},
   author={Weigel, Th.S.},
   title={Oriented right-angled Artin pro-$\ell$ groups and maximal pro-$\ell$ Galois groups},
   journal={Int. Math. Res. Not.},
   date={2024},
   note={In press, published on-line}
 }

\bib{cem}{article}{
   author={Chebolu, S.K.},
   author={Efrat, I.},
   author={Mina\v c, J.},
   title={Quotients of absolute Galois groups which determine the entire
   Galois cohomology},
   journal={Math. Ann.},
   volume={352},
   date={2012},
   number={1},
   pages={205--221},
   issn={0025-5831},
}

\bib{ddsms}{book}{
   author={Dixon, J.D.},
   author={du Sautoy, M.P.F.},
   author={Mann, A.},
   author={Segal, D.},
   title={Analytic pro-$p$ groups},
   series={Cambridge Studies in Advanced Mathematics},
   volume={61},
   edition={2},
   publisher={Cambridge University Press, Cambridge},
   date={1999},}

\bib{dummitlabute}{article}{
   author={Dummit, D.},
   author={Labute, J.},
   title={On a new characterization of Demuskin groups},
   journal={Invent. Math.},
   volume={73},
   date={1983},
   number={3},
   pages={413--418},
   issn={0020-9910},
}

\bib{efrat:small}{article}{
   author={Efrat, I.},
   title={Small maximal pro-$p$ Galois groups},
   journal={Manuscripta Math.},
   volume={95},
   date={1998},
   number={2},
   pages={237--249},
   issn={0025-2611},
}
\bib{ido:massey}{article}{
   author={Efrat, I.},
   title={The Zassenhaus filtration, Massey products, and representations of
   profinite groups},
   journal={Adv. Math.},
   volume={263},
   date={2014},
   pages={389--411},
   issn={0001-8708},
}
\bib{idoeli}{article}{
   author={Efrat, I.},
   author={Matzri, E.},
   title={Triple Massey products and absolute Galois groups},
   journal={J. Eur. Math. Soc. (JEMS)},
   volume={19},
   date={2017},
   number={12},
   pages={3629--3640},
   issn={1435-9855},
}
	
\bib{em}{article}{
   author={Efrat, I.},
   author={Mina\v c, J.},
   title={On the descending central sequence of absolute Galois groups},
   journal={Amer. J. Math.},
   volume={133},
   date={2011},
   number={6},
   pages={1503--1532},
   issn={0002-9327},
}

\bib{eq:kummer}{article}{
   author={Efrat, I.},
   author={Quadrelli, C.},
   title={The Kummerian property and maximal pro-$p$ Galois groups},
   journal={J. Algebra},
   volume={525},
   date={2019},
   pages={284--310},
   issn={0021-8693},
}

\bib{friedjarden}{book}{
   author={Fried, M. D.},
   author={Jarden, M.},
   title={Field arithmetic},
   series={Ergebnisse der Mathematik und ihrer Grenzgebiete. 3. Folge. A
   Series of Modern Surveys in Mathematics },
   volume={11},
   edition={4},
   note={Revised by Jarden},
   publisher={Springer-Verlag, Berlin},
  date={2023},
   pages={xxxi+812},
}

	\bib{HW:book}{book}{
   author={Haesemeyer, C.},
   author={Weibel, Ch.},
   title={The norm residue theorem in motivic cohomology},
   series={Annals of Mathematics Studies},
   volume={200},
   publisher={Princeton University Press, Princeton, NJ},
   date={2019},
}

\bib{labute:demushkin}{article}{
   author={Labute, J.P.},
   title={Classification of Demushkin groups},
   journal={Canad. J. Math.},
   volume={19},
   date={1967},
   pages={106--132},
   issn={0008-414X},
}

\bib{melnikov}{article}{
   author={Mel\cprime nikov, O. V.},
   title={Subgroups and the homology of free products of profinite groups},
   language={Russian},
   journal={Izv. Akad. Nauk SSSR Ser. Mat.},
   volume={53},
   date={1989},
   number={1},
   pages={97--120},
   translation={
      journal={Math. USSR-Izv.},
      volume={34},
      date={1990},
      number={1},
      pages={97--119},
      issn={0025-5726},
   },
}

\bib{MerSca1}{unpublished}{
   author={Merkurjev, A.},
   author={Scavia, F.},
   title={Degenerate fourfold Massey products over arbitrary fields},
   date={2022},
   note={Preprint, available at {\tt arXiv:2208.13011}},
}

\bib{MerScaH90}{unpublished}{
   author={Merkurjev, A.},
   author={Scavia, F.},
   title={On the Massey Vanishing Conjecture and Formal Hilbert 90},
   date={2023},
   note={Preprint, available at {\tt arXiv:2308.13682}},
}

\bib{MPQT}{article}{
   author={Mina\v c, J.},
   author={Pasini, F.},
   author={Quadrelli, C.},
   author={T\^{a}n, N. D.},
   title={Koszul algebras and quadratic duals in Galois cohomology},
   journal={Adv. Math.},
   volume={380},
   date={2021},
   pages={article no. 107569},
   }

   \bib{MT:ker}{article}{
   author={Mina\v c, J.},
   author={T\^{a}n, N.D.},
   title={The kernel unipotent conjecture and the vanishing of Massey
   products for odd rigid fields},
   journal={Adv. Math.},
   volume={273},
   date={2015},
   pages={242--270},
}

\bib{mt:massey}{article}{
   author={Mina\v c, J.},
   author={T\^{a}n, N.D.},
   title={Triple Massey products and Galois theory},
   journal={J. Eur. Math. Soc. (JEMS)},
   volume={19},
   date={2017},
   number={1},
   pages={255--284},
   issn={1435-9855},
}

\bib{nsw:cohn}{book}{
   author={Neukirch, J.},
   author={Schmidt, A.},
   author={Wingberg, K.},
   title={Cohomology of number fields},
   series={Grundlehren der Mathematischen Wissenschaften},
   volume={323},
   edition={2},
   publisher={Springer-Verlag, Berlin},
   date={2008},
   pages={xvi+825},
   isbn={978-3-540-37888-4},}

\bib{pal:massey}{unpublished}{
   author={P\'al, A.},
   author={Szab\'o, E.},
   title={The strong Massey vanishing conjecture for fields with virtual cohomological dimension at most 1},
   date={2020},
   note={Preprint, available at {\tt arXiv:1811.06192}},
}

\bib{cq:bk}{article}{
   author={Quadrelli, C.},
   title={Bloch-Kato pro-$p$ groups and locally powerful groups},
   journal={Forum Math.},
   volume={26},
   date={2014},
   number={3},
   pages={793--814},
   issn={0933-7741},
}

\bib{cq:2relUK}{article}{
   author={Quadrelli, C.},
   title={Pro-$p$ groups with few relations and universal Koszulity},
   date={2021},
   journal={Math. Scand.},
   volume={127},
   number={1},
   pages={28--42},
}


\bib{cq:noGal}{article}{
     author={Quadrelli, C.},
   title={Two families of pro-$p$ groups that are not absolute Galois
   groups},
   journal={J. Group Theory},
   volume={25},
   date={2022},
   number={1},
   pages={25--62},
   issn={1433-5883},
}
  
\bib{cq:galfeat}{article}{
   author={Quadrelli, C.},
   title={Galois-theoretic features for 1-smooth pro-$p$ groups},
   journal={Canad. Math. Bull.},
   volume={65},
   date={2022},
   number={2},
   pages={525--541},
}

\bib{cq:1smoothBK}{article}{
   author={Quadrelli, C.},
   title={1-smooth pro-$p$ groups and Bloch-Kato pro-$p$ groups},
   journal={Homology Homotopy Appl.},
   volume={24},
   date={2022},
   pages={53--67},
   number={2},
   }

\bib{cq:massey}{article}{
   author={Quadrelli, C.},
   title={Massey products in Galois cohomology and the Elementary Type Conjecture},
      date={2024},
   journal={J. Number Theory},
   volume={258},
   pages={40--65},}

\bib{qsv:quadratic}{article}{
				author={Quadrelli, C.},
				author={Snopce, I.},
				author={Vannacci, M.},
				title={On pro-$p$ groups with quadratic cohomology},
				date={2022},
				journal={J. Algebra},
				volume={612},
				pages={636--690},
			}

\bib{qw:cyc}{article}{
   author={Quadrelli, C.},
   author={Weigel, Th.S.},
   title={Profinite groups with a cyclotomic $p$-orientation},
   date={2020},
   volume={25},
   journal={Doc. Math.},
   pages={1881--1916}
   }

\bib{qw:bogomolov}{article}{
   author={Quadrelli, C.},
   author={Weigel, Th.S.},
   title={Oriented pro-$\ell$ groups with the Bogomolov-Positselski
   property},
   journal={Res. Number Theory},
   volume={8},
   date={2022},
   number={2},
   pages={Paper No. 21},
   issn={2522-0160},
}

\bib{ribes:amalg}{article}{
   author={Ribes, L.},
   title={On amalgamated products of profinite groups},
   journal={Math. Z.},
   volume={123},
   date={1971},
   pages={357--364},
   issn={0025-5874},
}


\bib{serre:galc}{book}{
   author={Serre, J.-P.},
   title={Galois cohomology},
   series={Springer Monographs in Mathematics},
   edition={Corrected reprint of the 1997 English edition},
   note={Translated from the French by Patrick Ion and revised by the
   author},
   publisher={Springer-Verlag, Berlin},
   date={2002},
   pages={x+210},
   isbn={3-540-42192-0},}
		

\bib{sz:raags}{article}{
	author={Snopce, I.},
	author={Zalesski\u{\i}, P.},
	title={Right-angled Artin pro-$p$ groups},
	date={2022},
	journal={Bull. Lond. Math. Soc.},
	volume={54},
	pages={1904-1922},
	number={5},
}

\bib{voev}{article}{
   author={Voevodsky, V.},
   title={On motivic cohomology with $\mathbf{Z}/l$-coefficients},
   journal={Ann. of Math. (2)},
   volume={174},
   date={2011},
   number={1},
   pages={401--438},
   issn={0003-486X},
   }

   \bib{vogel}{report}{
   author={Vogel, D.},
   title={Massey products in the Galois cohomology of number fields},
   date={2004},
   note={PhD thesis, University of Heidelberg},
   eprint={http://www.ub.uni-heidelberg.de/archiv/4418},
}

\bib{wurfel}{article}{
   author={W\"{u}rfel, T.},
   title={On a class of pro-$p$ groups occurring in Galois theory},
   journal={J. Pure Appl. Algebra},
   volume={36},
   date={1985},
   number={1},
   pages={95--103},
}

\end{biblist}
\end{bibdiv}
\end{document}